\newtheorem{thm}{Theorem}[section]
\newtheorem{cor}[thm]{Corollary}
\newtheorem{conj.}[thm]{Conjecture}
\newtheorem{lem}[thm]{Lemma}
\newtheorem{prop}[thm]{Proposition}
\theoremstyle{definition}
\newtheorem{defn}[thm]{Definition}
\theoremstyle{remark}
\numberwithin{equation}{section}
\newtheorem{exa}[thm]{Example}
\newcommand{\h}{\mathcal{H}}
\begin{document}

\title[Dual pair and Approximate dual for continuous frames ...]
{Dual pair and Approximate dual for continuous frames in Hilbert spaces }%
\author[A. Rahimi, Z. Darvishi, B. Daraby ]{A. Rahimi, Z. Darvishi, B. Daraby}
\address{Department of Mathematics, University of Maragheh, Maragheh, Iran.}
\email{rahimi@maragheh.ac.ir}
\address{Department of Mathematics, University of Maragheh, Maragheh, Iran.}
\email{darvishi\_z@ymail.com}
\address{Department of Mathematics, University of Maragheh, Maragheh, Iran.}
\email{bdaraby@maragheh.ac.ir}

\subjclass[2010]{Primary 42C15; Secondary 41C40, 46C99, 41A65.}
\keywords{Continuous Frame, Continuous Riesz Basis, Dual Continuous Frame, Approximate Dual Continuous Frame. }

\begin{abstract}
In this manuscript, the concept of dual  and approximate dual for continuous frames in Hilbert spaces will be introduced. Some of its properties will be studied. Also, the relations between two continuous Riesz bases  in Hilbert spaces will be clarified through examples.
\end{abstract}
\maketitle
\section{Introduction}
Frames for Hilbert spaces have been first introduced by Duffin and Scheaffer
in the study of some problems in nonharmonic Fourier series in 1952, \cite{duf}.
A discrete frame is a countable family of elements in a separable Hilbert space
which allows for a stable, not necessarily unique,
decomposition of an arbitrary element into an expansion of the frame elements.
\par Recall that for a Hilbert space $\h$ and a countable index set $I$,
a family of vectors $\{f_i\}_{i\in I}\subseteq\h$ is called a discrete frame for $\h$,
if there exist constants $0<A\leq B<+\infty$ such that
$$A\|f\|^2\leq\sum_{i\in I}|\langle f,f_i\rangle|^2\leq B\|f\|^2,\hspace{.5cm}f\in\h,$$
the constants $A$ and $B$ are called frame bounds. The frame $\{f_i\}_{i\in I}$ is called tight if $A=B$ and Parseval if $A=B=1$.
The frame decomposition is the most important frame results.
It shows for the frame $\{f_i\}_{i\in I}$,
every element in $\h$ has a representation as an infinite linear combination of the frame elements;
 i.e., there exist coefficients $\{c_i(f)\}_{i\in I}$ such that
$f=\sum_{i\in I}c_i(f)f_i$, where $f\in\h$ is arbitrary.
Thus, it is natural to say that a frame is some kind of a generalized basis.
Usually, we want to work with coefficients which depend
continuously and linearly on $f$,
by ``Riesz representation theorem'',
this implies that the $i$-th coefficient in the expansion of $f$
should have on the form $c_i(f)=\langle f,g_i\rangle$ for some $g_i\in\h$.
The sequence $\{g_i\}_{i\in I}$ is called a dual frame of $\{f_i\}_{i\in I}$.
Li \cite{li} provided a characterization and construction of general frame decomposition. He showed that for generating all duals for a given frame,
it is enough to find the
left-inverses of a one-to-one mapping
and found a general parametric and algebraic formula for all duals.
It is usually complicated to calculate a dual frame explicitly. Hence Christensen and Laugesen seek methods for constructing approximate duals,
see \cite{olela}.
Note that the idea of approximate dual frames has appeared previously,
especially for wavelets \cite{hol}, Gabor systems \cite{balfe,feka},
in the general context of coorbit theory \cite{fegr}
and the sensor modeling \cite{liya}.
\par New applications of frames, specially in the last decade, motivated the researcher to find some generalizations of frames like continuous frames \cite{ali,dr2}, fusion frames \cite{6}, $g$-frames \cite{sun}, controlled and weighted frames \cite{Balazs,rafe}, $p$-frames \cite{18}, $K$-frames \cite{Gavruta} and etc.\par
The notion of continuous frames was introduced by Kaiser in \cite{kai}
and independently by Ali, Antoine and Gazeau \cite{ali}.
The windowed Fourier transform and the continuous wavelet transform are just two instances of continuous frames but at the same time the main motivation for its definition.
Gabardo and Han in \cite{han} defined the concept of  dual frames for continuous frames.

\section{Preliminaries}
In this section, we review some notations and definitions.
Throughout this paper, $\h$ is a Hilbert space
and $(\Omega,\mu)$ a measure space with positive measure $\mu$.
\begin{defn}
A weakly-measurable mapping $F:\Omega\rightarrow\h$ is called a
continuous frame for $\h$ with respect to $(\Omega,\mu)$
if there exist constants $0<A\leq B<\infty$ such that
$$A\|f\|^2\leq\int_{\Omega}|\langle f,F(\omega)\rangle|^2 d\mu(\omega)\leq B\|f\|^2,\hspace{.5cm}f\in\h.$$
The constants $A$ and $B$ are called continuous frame bounds.
This mapping $F$ is called tight continuous frame if $A=B$
and if $A=B=1$, it called a Parseval continuous frame.
This mapping is called Bessel if the second inequality holds.
In this case, $B$ is called Bessel constant.\par
Suppose $F:\Omega\rightarrow\h$ is a Bessel mapping with bound $B$.
The operator $T_F:L^2(\Omega,\mu)\rightarrow\h$ weakly defined by
$$\langle T_F\varphi,f\rangle=\int_{\Omega}\varphi(\omega)\langle F(\omega),f\rangle d\mu(\omega),
\hspace{.5cm}\varphi\in L^2(\Omega,\mu),\hspace{.5cm}f\in\h,$$
is well-defined, linear, bounded with bound $\sqrt{B}$ and
its adjoint is given by
$$T_F^*:\h\rightarrow L^2(\Omega,\mu),\hspace{.5cm}(T_F^*f)(\omega)=\langle f,F(\omega)\rangle,
\hspace{.5cm}\omega\in\Omega,\hspace{.5cm}f\in\h.$$
The operator $T_F$ is called the synthesis operator and
$T_F^*$ is called the analysis operator of $F$.
For continuous frame $F$ with bounded $A$ and $B$,
the operator $S_F=T_FT_F^*$ is called continuous frame operator
and this is bounded, invertible, positive and $AI_{\h}\leq S_F\leq BI_{\h}$.
In fact,
$$\langle S_Ff,g\rangle=\int_{\Omega}\langle f,F(\omega)\rangle\langle F(\omega),g\rangle d\mu(\omega),
\hspace{.5cm}f,g\in\h.$$
For all $f,g\in\h$, the reconstruction formulas are as follows
$$\langle f,g\rangle=\int_{\Omega}
\langle f,F(\omega)\rangle\langle S_F^{-1}F(\omega),g\rangle d\mu(\omega)
=\int_{\Omega}
\langle f,S_F^{-1}F(\omega)\rangle\langle F(\omega),g\rangle d\mu(\omega).$$
\end{defn}
We end this short section by a well known example named wavelet frames.
\begin{exa}\label{2.2}
Let $\Omega=(0,+\infty)\times\mathbb R$ be the affine group,
with group law $(a,b)(a',b')=(aa',b+ab')$.
An element $\psi\in L^2(\mathbb R)$ is said to be admissible if
$\|\psi\|_2=1$ and $C_{\psi}=\int_{0}^{+\infty}\frac{|\hat{\psi}(\xi)|^2}{\xi}d\xi<+\infty$.
For such admissible function $\psi$,  we have
$$\int_{-\infty}^{+\infty}\int_{0}^{+\infty}|\langle f,T_aD_b\psi\rangle|^2\frac{dadb}{a^2}=C_{\psi}\|f\|^2,\hspace{.5cm} f\in L^2(\mathbb R),$$
where $T_af(t)=f(t-a)$ and $D_bf(t)=\frac{1}{\sqrt{b}}f(\frac{t}{b})$, see \cite{chri}.
That is, $\{T_aD_b\psi\}_{a>0,b\in\mathbb R}$ is a
tight continuous frame with respect to $(\Omega,\frac{dadb}{a^2})$
with the frame operator $S=C_{\psi}I$.
\end{exa}
\section{The duality of continuous frames}
Reconstruction of the original vector from frames,
g-frames, fusion frames, continuous frames as well as their extensions,
is typically achieved by using a so-called (alternate or standard) dual system.
\begin{defn}
Let $F$  and $G$ be two Bessel mappings
with synthesis operators $T_F$ and $T_G$, respectively.
We call $G$ a dual of $F$ if the following equality holds
$$\langle f,g\rangle=\int_{\Omega}\langle f,F(\omega)\rangle\langle G(\omega),g\rangle d\mu(\omega),\hspace{.5cm}f,g\in\h.$$
In this case, $(F,G)$ is called a dual pair for $\h$.
\end{defn}
This definition is equivalent to $T_GT_F^*=I_{\h}$.
The condition
$$\langle f,g\rangle=\int_{\Omega}\langle f,F(\omega)\rangle\langle G(\omega),g\rangle d\mu(\omega),\hspace{.5cm}f,g\in\h$$
is equivalent
$$\langle f,g\rangle=\int_{\Omega}\langle f,G(\omega)\rangle\langle F(\omega),g\rangle d\mu(\omega),\hspace{.5cm}f,g\in\h$$
because $T_GT_F^*=I$ if and only if $T_FT_G^*=I$.\par
For the continuous frame $F$, the mapping $S_F^{-1}F$ is called standard dual of $F$.
It is certainly possible for a continuous frame $F$ to have only one dual.
In this case, the continuous frame $F$ is called a Riesz-type frame.
Riesz-type frames are actually frames,
for which the analysis operator is onto. Also, the continuous frame $F$
 is Riesz-type frame if and only if $T_F^*$ is onto \cite{arefi}.
 \par
Similar to discrete case, by a simple calculation, it is easy to show that $G$ is a dual of $F$ if and only if
$G=S_F^{-1}F+H$, where $H$ satisfies in
the condition $\int_{\Omega}\langle f,F(\omega)\rangle \langle H(\omega),g\rangle d\mu(\omega)=0$, for all $f,g\in\h$.\\
It is easy verifiable that
$F$ is a tight continuous frame for $\h$ with bound $C$ if and only if $(F,\frac{1}{C}F)$ is dual pair for it. \par
By using Example \ref{2.2},
the pair $(T_aD_b\psi,\frac{1}{C_{\psi}}T_aD_b\psi)$ is a dual pair in $L^2(\mathbb R)$ and we have
$$f=\frac{1}{C_{\psi}}\int_{-\infty}^{+\infty}\int_{0}^{+\infty}\langle f,T_aD_b\psi\rangle T_aD_b\psi\frac{dadb}{a^2},\hspace{.5cm}f\in L^2(\mathbb R).$$
It is not clear for which wavelet frames the standard dual frame
consists of wavelet as well.
More generally, there are some wavelet frames with no dual wavelet frames at all \cite{da}.\par
The following is an another example of continuous frame with one of its duals.
\begin{exa}\label{3.3}
Consider $\h=\mathbb{R}^2$ with the standard basis $\{e_1,e_2\}$,
where $e_1=(1,0)$ and $e_2=(0,1)$.
Put $B_{\mathbb{R}^2}=\{x\in\mathbb{R}^2:~~\|x\|\leq1\}$.
Let $\Omega=B_{\mathbb{R}^2}$ and $\lambda$ be the Lebesgue measure.
Define $F:B_{\mathbb{R}^2}\to\mathbb{R}^2$ and $G:B_{\mathbb{R}^2}\to\mathbb{R}^2$ such that
\[
F(\omega)=\left\{\begin{array}{ll}
\frac{1}{\sqrt{\lambda(B_1)}}e_1,\hspace{.5cm}\omega\in B_1,\cr
\frac{1}{\sqrt{\lambda(B_2)}}e_2,\hspace{.5cm}\omega\in B_2,\cr
0,\hspace{1.9cm}\omega\in B_3,\cr
\end{array}\right.
\]
and
\[
G(\omega)=\left\{\begin{array}{ll}
\frac{1}{\sqrt{\lambda(B_1)}}e_1,\hspace{.9cm}\omega\in B_1,\cr
\frac{1}{\sqrt{\lambda(B_2)}}e_2,\hspace{.9cm}\omega\in B_2,\cr
\frac{2}{\sqrt{\lambda(B_3)}}e_2,\hspace{.9cm}\omega\in B_3,\cr
\end{array}\right.
\]
where $\{B_1,B_2,B_3\}$ is a partition of $B_{\mathbb{R}^2}$.
It is easy to check that $F$ and $G$ are continuous frames for $\mathbb{R}^2$ with respect to $(B_{\mathbb{R}^2},\lambda)$.
For each $x\in\mathbb{R}^2$, we have
\begin{align*}
(T_GT_F^*)(x)
& = \left(\int_{B_1}+\int_{B_2}+\int_{B_3}\right)(T_F^*x)(\omega)G(\omega)d\lambda(\omega)\\
& = \int_{B_1}\langle x,\frac{1}{\sqrt{\lambda(B_1)}}e_1\rangle\frac{1}{\sqrt{\lambda(B_1)}}e_1d\lambda(\omega)\\
&\hspace{.5cm} + \int_{B_2}\langle x,\frac{1}{\sqrt{\lambda(B_2)}}e_2\rangle\frac{1}{\sqrt{\lambda(B_2)}}e_2d\lambda(\omega) \\
&\hspace{.5cm} + 0\\
& = \langle x,e_1\rangle e_1 + \langle x,e_2\rangle e_2\\
& = x
\end{align*}
i.e., $(F,G)$ is a dual pair for $\mathbb{R}^2$.
\end{exa}
Similar to discrete frames, for continuous frame we have the following assertion.
\begin{prop}
The Bessel mapping $F:\Omega\to\h$ is a continuous frame for $\h$ with respect to $(\Omega,\mu)$ if and only if
there exists a Bessel mapping $G:\Omega\to\h$ such that for each $f,g\in\h$,
\begin{align}
\langle f,g\rangle=\int_{\Omega}\langle f,F(\omega)\rangle \langle G(\omega),g\rangle d\mu(\omega).
\end{align}
\end{prop}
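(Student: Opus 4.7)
The statement is a continuous-frame analogue of the classical discrete fact that a Bessel family is a frame precisely when it admits a Bessel dual. I would therefore split the proof into the two natural implications and reuse the machinery already set up in the Preliminaries.

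For the forward direction ($\Rightarrow$), assume $F$ is a continuous frame with bounds $A,B$. Then $AI_{\h}\le S_F\le BI_{\h}$, so $S_F$ is invertible and $S_F^{-1}$ is a bounded, positive, self-adjoint operator with $\|S_F^{-1}\|\le 1/A$. Set $G:=S_F^{-1}F$. I would first check that $G$ is Bessel: for every $f\in\h$,
$$\int_{\Omega}|\langle f,S_F^{-1}F(\omega)\rangle|^2\,d\mu(\omega)=\int_{\Omega}|\langle S_F^{-1}f,F(\omega)\rangle|^2\,d\mu(\omega)\le B\|S_F^{-1}f\|^2\le \tfrac{B}{A^{2}}\|f\|^2,$$
so $G$ has Bessel bound $B/A^{2}$. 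The reconstruction identity for $(F,G)$ is then exactly the first of the two reconstruction formulas quoted in the Preliminaries, so no further work is needed.

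For the converse ($\Leftarrow$), assume $F$ is Bessel with bound $B_F$ and that a Bessel mapping $G$ with bound $B_G$ satisfies $(3.1)$. The upper frame bound for $F$ is the Bessel hypothesis, so the only thing to produce is a lower bound. Specializing $(3.1)$ to $g=f$ gives
$$\|f\|^{2}=\int_{\Omega}\langle f,F(\omega)\rangle\langle G(\omega),f\rangle\,d\mu(\omega).$$
Applying the Cauchy--Schwarz inequality in $L^{2}(\Omega,\mu)$ to the pair of functions $\omega\mapsto\langle f,F(\omega)\rangle$ and $\omega\mapsto\overline{\langle G(\omega),f\rangle}=\langle f,G(\omega)\rangle$, and then invoking the Bessel bound of $G$, yields
$$\|f\|^{4}\le\Bigl(\int_{\Omega}|\langle f,F(\omega)\rangle|^{2}d\mu(\omega)\Bigr)\Bigl(\int_{\Omega}|\langle f,G(\omega)\rangle|^{2}d\mu(\omega)\Bigr)\le B_{G}\|f\|^{2}\int_{\Omega}|\langle f,F(\omega)\rangle|^{2}d\mu(\omega),$$
so the lower frame bound $A=1/B_{G}$ works for $F$.

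The main point where a bit of care is needed is the Cauchy--Schwarz step in the second direction: one must verify that $\omega\mapsto\langle f,F(\omega)\rangle$ and $\omega\mapsto\langle f,G(\omega)\rangle$ genuinely lie in $L^{2}(\Omega,\mu)$, which is immediate from the Bessel assumptions but worth noting before applying the inequality. Beyond that the argument is formal, so the proposition follows once the two implications are assembled.
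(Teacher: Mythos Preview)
Your proof is correct and follows essentially the same route as the paper: for the forward implication you take $G=S_F^{-1}F$ and invoke the reconstruction formula, and for the converse you specialize to $g=f$, apply Cauchy--Schwarz in $L^2(\Omega,\mu)$, and use the Bessel bound of $G$ to obtain the lower frame bound $1/B_G$. The only differences are cosmetic---you give a slightly more explicit Bessel-bound check for $S_F^{-1}F$ and reverse the order of the two implications.
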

\begin{proof}
At the first, assume that there exists a Bessel mapping
$G$ with Bessel constant $B_G$, satisfying (3.1).
Then for any $f\in\h$,
\begin{align*}
\|f\|^4
&= \left|\int_{\Omega}\langle f,F(\omega)\rangle\langle G(\omega),f\rangle d\mu(\omega) \right|^2\\
&\leq\left(\int_{\Omega}|\langle f,F(\omega)\rangle\langle G(\omega),f\rangle| d\mu(\omega)\right)^2\\
&\leq\left(\int_{\Omega}|\langle f,F(\omega)\rangle|^2 d\mu(\omega)\right)
\left(\int_{\Omega}|\langle f,G(\omega)\rangle|^2 d\mu(\omega)\right)\\
&\leq\left(\int_{\Omega}|\langle f,F(\omega)\rangle|^2 d\mu(\omega)\right). B_G\|f\|^2.\\
\end{align*}
Thus, $F$ is a continuous frame for $\h$ with lower bound $B_G^{-1}$.\\
Conversely, let $F$ be a continuous frame for $\h$ with the frame operator $S_F$.
Thus, for all $f,g\in\h$
$$\langle f,g\rangle=\int_{\Omega}\langle f,F(\omega)\rangle \langle S_F^{-1}F(\omega),g\rangle d\mu(\omega).$$
Put $G=S_F^{-1}F$.
\end{proof}
Like discrete frames \cite{chri}, for Bessel mappings,
we have the following corollary.
\begin{cor}
If $(F,G)$ is a dual pair for $\h$, then both $F$ and $G$ are continuous frames for $\h$.
\end{cor}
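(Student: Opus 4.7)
The plan is to deduce the corollary directly from the preceding proposition by exploiting the symmetry of the dual pair condition. Since $(F,G)$ is a dual pair, by definition both $F$ and $G$ are Bessel mappings, which already gives each of them an upper frame bound. So the only thing left to establish is a positive lower frame bound for $F$ and for $G$.

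For $F$: the proposition just proved states that if there exists a Bessel mapping $G$ with $\langle f,g\rangle=\int_{\Omega}\langle f,F(\omega)\rangle\langle G(\omega),g\rangle\,d\mu(\omega)$ for all $f,g\in\h$, then $F$ is a continuous frame (with lower bound $B_G^{-1}$, where $B_G$ is the Bessel bound of $G$). This is exactly the hypothesis we have, so $F$ is a continuous frame immediately.

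For $G$: I would invoke the equivalence, noted right after Definition 3.1, that $T_GT_F^*=I_{\h}$ if and only if $T_FT_G^*=I_{\h}$, i.e., the identity
\[
\langle f,g\rangle=\int_{\Omega}\langle f,F(\omega)\rangle\langle G(\omega),g\rangle\,d\mu(\omega)
\]
holds if and only if
\[
\langle f,g\rangle=\int_{\Omega}\langle f,G(\omega)\rangle\langle F(\omega),g\rangle\,d\mu(\omega).
\]
Thus $(G,F)$ is also a dual pair, and re-applying the proposition with the roles of $F$ and $G$ swapped yields that $G$ is a continuous frame as well (with lower bound $B_F^{-1}$).

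There is no real obstacle here: once the preceding proposition and the symmetry of the dual pair relation are in hand, the corollary reduces to two invocations of the proposition. The only thing to be careful about is to explicitly use the $T_GT_F^*=I\Leftrightarrow T_FT_G^*=I$ symmetry to justify the second application, rather than silently assuming that "dual" is a symmetric relation.
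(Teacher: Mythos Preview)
Your proposal is correct and matches the paper's intent: the paper states the corollary immediately after the proposition without a separate proof, treating it as a direct consequence, and your two-step application of the proposition together with the symmetry $T_GT_F^*=I_{\h}\Leftrightarrow T_FT_G^*=I_{\h}$ is exactly how one unpacks that implication.
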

Improving and extending Theorem 3.6 of \cite{dr2}
for standard dual,
we have the following theorem for any dual pairs.
It shows that when, we can remove some elements from
a continuous frame so that the remaining set is still a continuous frame.
\begin{thm}
Let $(F,G)$ be a dual pair for $\h$ and
there exists $\omega_0\in\Omega$ such that
$\mu(\{\omega_0\})\langle F(\omega_0),G(\omega_0)\rangle\neq1$.
Then $F:\Omega\backslash\{\omega_0\}\rightarrow \h$
is a continuous frame for $\h$.
\end{thm}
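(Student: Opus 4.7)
The plan is to construct a Bessel mapping $G':\Omega\setminus\{\omega_0\}\to\h$ so that $(F|_{\Omega\setminus\{\omega_0\}},G')$ forms a dual pair for $\h$; once this is in hand, Corollary 3.4 applies to the restricted pair and immediately yields that $F|_{\Omega\setminus\{\omega_0\}}$ is a continuous frame. The upper Bessel bound for $F|_{\Omega\setminus\{\omega_0\}}$ is inherited trivially from $F$, so the real content is the lower bound, and this will be packaged inside the existence of $G'$.

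Set $c:=\mu(\{\omega_0\})\langle F(\omega_0),G(\omega_0)\rangle$, so the hypothesis reads $c\neq 1$ and consequently $1-\overline{c}\neq 0$. The key algebraic move is to substitute $g=F(\omega_0)$ into the dual pair identity for $(F,G)$ and split off the atom at $\omega_0$:
$$\langle f,F(\omega_0)\rangle \;=\; \overline{c}\,\langle f,F(\omega_0)\rangle \;+\; \int_{\Omega\setminus\{\omega_0\}}\langle f,F(\omega)\rangle\langle G(\omega),F(\omega_0)\rangle\, d\mu(\omega).$$
Dividing by $1-\overline{c}$ expresses the ``missing'' scalar $\langle f,F(\omega_0)\rangle$ entirely in terms of integration over $\Omega\setminus\{\omega_0\}$. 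Plugging the result back into the analogous split
$$\langle f,g\rangle=\mu(\{\omega_0\})\langle f,F(\omega_0)\rangle\langle G(\omega_0),g\rangle+\int_{\Omega\setminus\{\omega_0\}}\langle f,F(\omega)\rangle\langle G(\omega),g\rangle\, d\mu(\omega)$$
collapses everything into a single integral over $\Omega\setminus\{\omega_0\}$ against $\langle f,F(\omega)\rangle$, and reading off the companion factor prescribes the definition
$$G'(\omega):=G(\omega)+\frac{\mu(\{\omega_0\})\langle G(\omega),F(\omega_0)\rangle}{1-\overline{c}}\,G(\omega_0),\qquad\omega\in\Omega\setminus\{\omega_0\}.$$

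What remains is verification. The dual pair relation $\langle f,g\rangle=\int_{\Omega\setminus\{\omega_0\}}\langle f,F(\omega)\rangle\langle G'(\omega),g\rangle\, d\mu(\omega)$ holds by construction, reproducing the algebra just performed. The Bessel property of $G'$ follows from a triangle-plus-Cauchy--Schwarz estimate whose only non-routine ingredient is $\int_{\Omega\setminus\{\omega_0\}}|\langle G(\omega),F(\omega_0)\rangle|^2\, d\mu(\omega)\le B_G\|F(\omega_0)\|^2$, which is the Bessel bound for $G$ evaluated at $F(\omega_0)$. I expect the main obstacle to be precisely the algebraic manoeuvre of the second paragraph: one has to notice that $g=F(\omega_0)$ is the right test vector, and the hypothesis $c\neq 1$ is used at exactly the point where we invert $1-\overline{c}$; everything else is bookkeeping and a standard invocation of Corollary 3.4.
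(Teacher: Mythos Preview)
Your argument is correct and shares its decisive step with the paper's proof: both substitute $g=F(\omega_0)$ into the dual-pair identity and split off the atom at $\omega_0$, using $c\neq 1$ to solve for $\langle f,F(\omega_0)\rangle$ in terms of an integral over $\Omega\setminus\{\omega_0\}$. From that point the two diverge in packaging. The paper immediately applies Cauchy--Schwarz to obtain
\[
|\langle f,F(\omega_0)\rangle|^2\le \frac{B_G\|F(\omega_0)\|^2}{|1-c|^2}\int_{\Omega\setminus\{\omega_0\}}|\langle f,F(\omega)\rangle|^2\,d\mu(\omega),
\]
adds this to the lower frame inequality $B_G^{-1}\|f\|^2\le\int_\Omega|\langle f,F(\omega)\rangle|^2\,d\mu$, and reads off an explicit lower bound $\dfrac{B_G^{-1}}{1+C\mu(\{\omega_0\})}$ for the restricted family. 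You instead push the algebra one step further to synthesize an explicit dual $G'$ on $\Omega\setminus\{\omega_0\}$ and then invoke Corollary~3.4. Your route yields extra information---a concrete dual for the restricted frame---while the paper's route delivers the lower frame bound directly without constructing $G'$; both are short and rely on the same hypothesis at the same place.
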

\begin{proof}
We assume that the Bessel constants $F$ and $G$ are $B_F$ and $B_G$, (respectively).
If $f\in\h$, then
\begin{align*}
\langle F(\omega_0),f\rangle
&= \int_{\Omega\backslash\{\omega_0\}}\langle F(\omega_0),G(\omega)\rangle\langle F(\omega),f\rangle d\mu(\omega)\\
&\hspace{.3cm}+ \langle F(\omega_0),G(\omega_0)\rangle\langle F(\omega_0),f\rangle \mu(\{\omega_0\}).\\
\end{align*}
Therefore,
\begin{align*}
|\langle f,F(\omega_0)\rangle |^2
& \leq\frac{1}{|1-\mu(\{\omega_0\})\langle F(\omega_0),G(\omega_0)\rangle |^2}\left(\int_{\Omega\backslash\{\omega_0\}}|\langle f,F(\omega)\rangle|^2 d\mu(\omega)\right)\\
&\hspace{.5cm} \left(\int_{\Omega\backslash\{\omega_0\}}|\langle F(\omega_0),G(\omega)\rangle |^2d\mu(\omega)\right)\\
& \leq\frac{B_G\|F(\omega_0)\|^2}{|1-\mu(\{\omega_0\})\langle F(\omega_0),G(\omega_0)\rangle |^2}\left(\int_{\Omega\backslash\{\omega_0\}}|\langle f,F(\omega)\rangle |^2d\mu(\omega)\right).\\
\end{align*}
Put $C=\frac{B_G\|F(\omega_0)\|^2}{|1-\mu(\{\omega_0\})\langle F(\omega_0),G(\omega_0)\rangle |^2}$.
We have
$$B_G^{-1}\|f\|^2\leq(1+C\mu(\{\omega_0\}))\int_{\Omega\backslash\{\omega_0\}}|\langle f,F(\omega)\rangle |^2d\mu(\omega).$$
Hence $F:\Omega\backslash\{\omega_0\}\to\h$ is a continuous frame with lower bound $\frac{B_G^{-1}}{1+C\mu(\{\omega_0\})}.$
\end{proof}
Concerning the above theorem, a question occurs. What happen in case $\mu(\{\omega_0\})\langle F(\omega_0),S_F^{-1}F(\omega_0)\rangle=1$?
To achieve this purpose, we need the following lemma.

 \begin{lem}\cite{dr2}\label{3.7}
The continuous frame coefficients
$\{\langle f,S_F^{-1}F(\omega)\rangle\}_{\omega\in\Omega}$
has minimal $L^2$-norm among all coefficients
$\{\phi(\omega)\}_{\omega\in\Omega}$
for which
$$f=\int_{\Omega}\phi(\omega)F(\omega)d\mu(\omega)$$
for some $\phi\in L^2(\Omega,\mu)$; i. e.,
$$\int_{\Omega}|\phi(\omega)|^2d\mu(\omega)=\int_{\Omega}|\langle f,S_F^{-1}F(\omega)\rangle|^2d\mu(\omega)+\int_{\Omega}|\phi(\omega)-\langle f,S_F^{-1}F(\omega)\rangle|^2d\mu(\omega).$$
\end{lem}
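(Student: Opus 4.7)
The plan is to exploit the fact that the canonical coefficient map $\psi(\omega) := \langle f, S_F^{-1}F(\omega)\rangle$ can be written as $\psi = T_F^* S_F^{-1} f$, and then to recognize the desired identity as a Pythagorean decomposition in $L^2(\Omega,\mu)$ obtained by splitting the given $\phi$ orthogonally along $\ker T_F$ and its complement.

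First, I would observe that $\psi = T_F^* S_F^{-1} f \in L^2(\Omega,\mu)$ by the boundedness of $T_F^*$ and the invertibility of $S_F$. Applying $T_F$ gives $T_F \psi = T_F T_F^* S_F^{-1} f = S_F S_F^{-1} f = f$, so $\psi$ is indeed an admissible coefficient sequence representing $f$. By hypothesis $T_F \phi = f$ as well, hence
\[
T_F(\phi - \psi) = 0, \qquad \text{i.e.}\quad \phi - \psi \in \ker T_F.
\]

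Next, the key orthogonality: $\psi = T_F^*(S_F^{-1} f)$ lies in the range of $T_F^*$, and for the bounded operator $T_F: L^2(\Omega,\mu) \to \h$ the general identity $\operatorname{range}(T_F^*) \subseteq (\ker T_F)^{\perp}$ holds (one-line verification: if $h = T_F^* u$ and $v \in \ker T_F$, then $\langle h, v\rangle_{L^2} = \langle u, T_F v\rangle = 0$). Consequently $\psi \perp (\phi - \psi)$ in $L^2(\Omega,\mu)$, and the Pythagorean theorem applied to $\phi = \psi + (\phi - \psi)$ yields
\[
\int_\Omega |\phi(\omega)|^2\, d\mu(\omega) = \int_\Omega |\psi(\omega)|^2\, d\mu(\omega) + \int_\Omega |\phi(\omega) - \psi(\omega)|^2\, d\mu(\omega),
\]
which, after substituting back the definition of $\psi$, is exactly the asserted identity. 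Minimality of the $L^2$-norm of the canonical coefficients then follows immediately, since the last term is nonnegative and vanishes only when $\phi = \psi$ a.e.

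The only subtle point I would flag is the passage through the range of $T_F^*$: one might be tempted to invoke $\overline{\operatorname{range}(T_F^*)} = (\ker T_F)^\perp$ and worry about closedness, but here we never need this since $\psi$ is by construction an actual element of $\operatorname{range}(T_F^*)$, not merely a limit of such. Thus the argument is clean and the main technical content is really just the operator identity $T_F T_F^* = S_F$ from the preliminaries.
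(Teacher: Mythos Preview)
Your proof is correct and follows the standard route: write the canonical coefficients as $\psi = T_F^* S_F^{-1} f \in \operatorname{range}(T_F^*)$, note that any other admissible $\phi$ differs from $\psi$ by an element of $\ker T_F$, and apply the Pythagorean theorem using $\operatorname{range}(T_F^*) \perp \ker T_F$.

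As for comparison with the paper: this paper does not supply its own proof of the lemma --- it is quoted verbatim from \cite{dr2} and used only as an auxiliary tool for the subsequent theorem. So there is no in-paper argument to compare against. Your argument is precisely the expected one (and is presumably the same as the one in the cited source), and your remark that no closedness of $\operatorname{range}(T_F^*)$ is needed is well taken.
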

\begin{thm}
Let $F$ be a continuous frame for $\h$ with respect to $(\Omega,\mu)$
with frame operator $S_F$
 and $\omega_0\in\Omega$.
If $\mu(\{\omega_0\})\langle F(\omega_0),S_F^{-1}F(\omega_0)\rangle=1$,
then there exists a non-empty measurable set $\Omega_0\subset\Omega$
such that   $\omega_0\in\Omega_0$,
$\mu(\Omega_0)=0$
and $\{F(\omega)\}_{\omega\in\Omega\backslash\Omega_0}$ is incomplete.
\end{thm}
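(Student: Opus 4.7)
The plan is to apply Lemma~\ref{3.7} to the vector $f=F(\omega_0)$ and to extract the desired orthogonality from the equality case in the minimality statement. First I would observe that the hypothesis $\mu(\{\omega_0\})\langle F(\omega_0),S_F^{-1}F(\omega_0)\rangle=1$ forces $\mu(\{\omega_0\})>0$ (so $\omega_0$ is an atom of $\mu$) and $F(\omega_0)\neq 0$. This atom allows an alternative ``concentrated'' representation of $F(\omega_0)$ with coefficient
\[
\phi(\omega) \;=\; \tfrac{1}{\mu(\{\omega_0\})}\,\chi_{\{\omega_0\}}(\omega),
\]
since a direct check gives $\int_{\Omega}\phi(\omega)F(\omega)\,d\mu(\omega)=F(\omega_0)$, and $\phi\in L^{2}(\Omega,\mu)$ with $\|\phi\|_{L^{2}}^{2}=1/\mu(\{\omega_0\})$.

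Next I would compare this with the canonical coefficient $\phi_0(\omega)=\langle F(\omega_0),S_F^{-1}F(\omega)\rangle$. Using the frame-operator identity $\int_{\Omega}|\langle g,F(\omega)\rangle|^{2}\,d\mu(\omega)=\langle S_Fg,g\rangle$ at $g=S_F^{-1}F(\omega_0)$ yields $\|\phi_0\|_{L^{2}}^{2}=\langle F(\omega_0),S_F^{-1}F(\omega_0)\rangle$, which by hypothesis equals $1/\mu(\{\omega_0\})=\|\phi\|_{L^{2}}^{2}$. Plugging $\|\phi\|_{L^{2}}^{2}=\|\phi_0\|_{L^{2}}^{2}$ into Lemma~\ref{3.7} gives $\|\phi-\phi_0\|_{L^{2}}^{2}=0$, so $\phi_0=\phi$ $\mu$-almost everywhere. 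In particular, $\langle S_F^{-1}F(\omega_0),F(\omega)\rangle=0$ for $\mu$-a.e.\ $\omega\in\Omega\setminus\{\omega_0\}$.

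Finally, let $N$ be the null set where this vanishing fails and set $\Omega_0=\{\omega_0\}\cup N$, so that $\omega_0\in\Omega_0$ and $\Omega_0\setminus\{\omega_0\}=N$ is $\mu$-null. Since $F(\omega_0)\neq 0$ we have $S_F^{-1}F(\omega_0)\neq 0$, and by construction this nonzero vector is orthogonal to every $F(\omega)$ with $\omega\in\Omega\setminus\Omega_0$; hence $\{F(\omega)\}_{\omega\in\Omega\setminus\Omega_0}$ is incomplete. The only step I expect to require care is the comparison of the two $L^{2}$-norms, but this reduces to the atom computation and the standard identity for the frame operator, so no serious analytic obstacle arises; the real leverage comes entirely from the equality clause in Lemma~\ref{3.7}.
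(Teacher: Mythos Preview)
Your proof is correct and follows essentially the same route as the paper: both represent $F(\omega_0)$ via the concentrated coefficient $\chi_{\{\omega_0\}}/\mu(\{\omega_0\})$, invoke Lemma~\ref{3.7} to force $\langle S_F^{-1}F(\omega_0),F(\omega)\rangle=0$ for $\mu$-a.e.\ $\omega\in\Omega\setminus\{\omega_0\}$, and then exhibit $S_F^{-1}F(\omega_0)$ as a nonzero vector orthogonal to the remaining family. Your computation of $\|\phi_0\|_{L^2}^{2}=\langle F(\omega_0),S_F^{-1}F(\omega_0)\rangle$ via the frame-operator identity is slightly more direct than the paper's explicit splitting of the Lemma~\ref{3.7} identity over $\{\omega_0\}$ and its complement, but the substance is identical.
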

\begin{proof}
We have
$$F(\omega_0) = \int_{\Omega}\langle F(\omega_0),S_F^{-1}F(\omega)\rangle F(\omega)d\mu(\omega) =
\int_{\Omega}\frac{\chi_{\{\omega_0\}}}{\mu(\{\omega_0\})}F(\omega) d\mu(\omega),$$
such that $\chi_{\{\omega_0\}}$ is
the characteristic function of a set ${\{\omega_0\}}\subset\Omega$.
So Lemma \ref{3.7} yields the following relation between
 $\{\frac{\chi_{\{\omega_0\}}(\omega)}{\mu(\{\omega_0\})}\}$
and $\{\langle F(\omega_0),S_F^{-1}F(\omega)\rangle\}$
\begin{align*}
\int_{\Omega}|\frac{\chi_{\{\omega_0\}}(\omega)}{\mu(\{\omega_0\})}|^2d\mu(\omega)
& =\int_{\Omega}|\langle F(\omega_0),S_F^{-1}F(\omega)\rangle|^2d\mu(\omega)\\
& \hspace{.5cm}+\int_{\Omega}|\frac{\chi_{\{\omega_0\}}(\omega)}{\mu(\{\omega_0\})}-\langle F(\omega_0),S_F^{-1}F(\omega)\rangle|^2d\mu(\omega)\\
& =\int_{\Omega\backslash\{\omega_0\}}|\langle F(\omega_0),S_F^{-1}F(\omega)\rangle|^2d\mu(\omega)\\
& \hspace{.5cm}+|\langle F(\omega_0),S_F^{-1}F(\omega_0)\rangle|^2\mu(\{\omega_0\})\\
& \hspace{.5cm}+\int_{\Omega\backslash\{\omega_0\}}|\frac{\chi_{\{\omega_0\}}(\omega)}{\mu(\{\omega_0\})}-\langle F(\omega_0),S_F^{-1}F(\omega)\rangle|^2d\mu(\omega)\\
& \hspace{.5cm}+|\frac{1}{\mu(\{\omega_0\})}-\langle F(\omega_0),S_F^{-1}F(\omega_0)\rangle|^2\mu(\omega_0).
\end{align*}
From the above formula,
$$\int_{\Omega\backslash\{\omega_0\}}|\langle F(\omega_0),S_F^{-1}F(\omega)\rangle|^2d\mu(\omega)=0,$$
so that $\langle F(\omega_0),S_F^{-1}F(\omega)\rangle=0$
a.e. on $\Omega\backslash\{\omega_0\}$.
Put
$$\Omega_0=\{\omega\in\Omega:~~\langle S_F^{-1}F(\omega_0),F(\omega)\rangle\neq0\}.$$
It is clear that $\Omega_0$ is a measurable set with zero measure and $\omega_0\in\Omega_0$.
Thus we have the non-zero element $S_F^{-1}F(\omega_0)$
which is orthogonal to $\{F(\omega)\}_{\omega\in\Omega\backslash\Omega_0}$,
i.e.,  $\{F(\omega)\}_{\omega\in\Omega\backslash\Omega_0}$ is incomplete.
\end{proof}
Now we are going to give simple ways for construction of many dual pairs of a given dual pair.
\begin{thm}
Let $(F,G)$ be a dual pair for $\h$ and
let $U$ and $V$ be two bounded operators on $\h$ such that $VU^*=I_{\h}$.
Then, $(UF,VG)$ is a dual pair for $\h$.
\end{thm}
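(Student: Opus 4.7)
The plan is to reduce the assertion $(UF,VG)$ being a dual pair to the operator identity $T_{VG}T_{UF}^{*}=I_{\h}$, and then to express both synthesis operators in terms of $T_{F}$, $T_{G}$, $U$, $V$ so that the hypothesis $T_{G}T_{F}^{*}=I_{\h}$ (equivalent to $(F,G)$ being dual) and the hypothesis $VU^{*}=I_{\h}$ can be combined in one line.

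First, I would verify that $UF$ and $VG$ are Bessel mappings, which is needed just to make sense of the definition of a dual pair. This is immediate from $\langle f,UF(\omega)\rangle=\langle U^{*}f,F(\omega)\rangle$: the Bessel inequality for $UF$ follows from the one for $F$ with constant $\|U\|^{2}B_{F}$, and similarly for $VG$ with constant $\|V\|^{2}B_{G}$.

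The central step is the identity $T_{UF}=UT_{F}$ (and, analogously, $T_{VG}=VT_{G}$). I would derive it by testing weakly: for every $\varphi\in L^{2}(\Omega,\mu)$ and $f\in\h$,
\begin{equation*}
\langle T_{UF}\varphi,f\rangle=\int_{\Omega}\varphi(\omega)\langle UF(\omega),f\rangle\,d\mu(\omega)=\int_{\Omega}\varphi(\omega)\langle F(\omega),U^{*}f\rangle\,d\mu(\omega)=\langle T_{F}\varphi,U^{*}f\rangle=\langle UT_{F}\varphi,f\rangle.
\end{equation*}
Taking adjoints gives $T_{UF}^{*}=T_{F}^{*}U^{*}$, and similarly $T_{VG}^{*}=T_{G}^{*}V^{*}$.

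Combining these yields
\begin{equation*}
T_{VG}T_{UF}^{*}=VT_{G}T_{F}^{*}U^{*}=V\,I_{\h}\,U^{*}=VU^{*}=I_{\h},
\end{equation*}
using that $(F,G)$ dual is exactly $T_{G}T_{F}^{*}=I_{\h}$ and the assumption $VU^{*}=I_{\h}$. By the remark following the definition of a dual pair, this identity is equivalent to the integral reconstruction formula for $(UF,VG)$, so the conclusion follows. I do not anticipate any genuine obstacle: the only substantive point is recognizing the behaviour of the synthesis operator under post-composition of the frame map with a bounded operator, after which the proof is a two-line composition.
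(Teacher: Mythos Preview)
Your proposal is correct and follows essentially the same approach as the paper's own proof: establish that $UF$ and $VG$ are Bessel mappings with $T_{UF}=UT_F$ and $T_{VG}=VT_G$, then compute $T_{VG}T_{UF}^{*}=VT_GT_F^{*}U^{*}=VU^{*}=I_{\h}$. The only difference is that you spell out the weak verification of $T_{UF}=UT_F$ and the Bessel bounds, whereas the paper simply asserts these as clear.
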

\begin{proof}
It is clear that if $F$ is a Bessel mapping with synthesis operator $T_F$ and
$U$ is a bounded operator on $\h$,
then $UF$ is a Bessel mapping  with synthesis operator $T_{UF}=UT_F$.
Hence $UF$ and $VG$ are Bessel mappings and
$$T_{VG}T_{UF}^*=VT_GT_F^*U^*=VIU^*=I_{\h}.$$
 \end{proof}
\begin{cor}
If $(F,G)$ is a dual pair for $\h$ and $U$ is a unitary operator,
then $(UF,UG)$ is a dual pair for $\h$.
\end{cor}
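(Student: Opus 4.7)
The plan is to obtain this corollary as an immediate specialization of the preceding theorem. Recall that the theorem states: if $(F,G)$ is a dual pair for $\h$ and $U,V$ are bounded operators on $\h$ with $VU^*=I_{\h}$, then $(UF,VG)$ is a dual pair for $\h$. So the only task is to verify that the hypothesis of that theorem is met when we take $V=U$ and $U$ is unitary.

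First, I would recall that a unitary operator $U$ on $\h$ is bounded and satisfies $UU^*=U^*U=I_{\h}$. In particular $U$ is bounded, so $V:=U$ is bounded as well, and the identity $VU^*=UU^*=I_{\h}$ is immediate. Applying the previous theorem with this choice of $V$ then yields that $(UF,UG)=(UF,VG)$ is a dual pair for $\h$, which is exactly the claim.

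There is no real obstacle here; the entire content of the corollary is the observation that the defining relation $UU^*=I_{\h}$ for a unitary operator is precisely the hypothesis $VU^*=I_{\h}$ of the preceding theorem when $V=U$. If a stand-alone verification were desired, one could alternatively expand $T_{UG}T_{UF}^*=UT_GT_F^*U^*=UIU^*=I_{\h}$ directly, using the fact (noted in the proof of the previous theorem) that $T_{UF}=UT_F$ and $T_{UG}=UT_G$, but this merely reproduces the proof of the theorem in the special case $V=U$.
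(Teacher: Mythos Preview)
Your proposal is correct and matches the paper's approach exactly: the corollary is stated immediately after the theorem with no proof given, so the intended argument is precisely the specialization $V=U$ together with the unitary identity $UU^*=I_{\h}$ that you describe.
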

\begin{thm}
Let $(F,G)$ be a dual pair for $\h$ and
there exists a bounded operator $U\in L(\h)$ such that $(F,UG)$ is a dual pair for $\h$.
Then $U=I_{\h}$.
\end{thm}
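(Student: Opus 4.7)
The plan is to convert both hypotheses into operator identities via the synthesis operators and then simply compose. Recall that $(F,G)$ being a dual pair is equivalent to $T_G T_F^* = I_{\h}$, and likewise $(F, UG)$ being a dual pair is equivalent to $T_{UG} T_F^* = I_{\h}$.

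The key observation, already used in the proof of the preceding theorem, is that for a Bessel mapping $G$ and a bounded operator $U$ on $\h$, the composition $UG$ is again Bessel with synthesis operator $T_{UG} = U T_G$. This follows immediately from the weak definition of $T_G$: for $\varphi \in L^2(\Omega,\mu)$ and $f\in\h$,
\[
\langle T_{UG}\varphi, f\rangle = \int_\Omega \varphi(\omega)\langle UG(\omega), f\rangle\, d\mu(\omega) = \int_\Omega \varphi(\omega)\langle G(\omega), U^*f\rangle\, d\mu(\omega) = \langle U T_G\varphi, f\rangle.
\]

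With that identity in hand, the hypothesis $T_{UG} T_F^* = I_{\h}$ rewrites as $U T_G T_F^* = I_{\h}$. But the first dual pair hypothesis says $T_G T_F^* = I_{\h}$, so this collapses to $U = I_{\h}$, as required.

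I do not anticipate any genuine obstacle here; the whole content is in recognizing that $T_{UG} = U T_G$, after which the conclusion is a one-line composition. The only thing worth being careful about is stating that identity explicitly (or citing the proof of the previous theorem where it was used) so the argument is self-contained.
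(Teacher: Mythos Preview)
Your proof is correct and is essentially the same as the paper's: the paper works directly with the integral form, writing $\langle f,U^*g\rangle=\int_\Omega\langle f,F(\omega)\rangle\langle G(\omega),U^*g\rangle\,d\mu=\int_\Omega\langle f,F(\omega)\rangle\langle UG(\omega),g\rangle\,d\mu=\langle f,g\rangle$ to conclude $U^*=I_{\h}$, which is exactly your operator identity $T_{UG}T_F^*=UT_GT_F^*=I_{\h}$ unpacked at the level of inner products.
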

\begin{proof}
For all $f,g\in\h$,
$$\langle f,U^*g\rangle=\int_{\Omega}\langle f,F(\omega)\rangle\langle G(\omega),U^*g\rangle d\mu=\int_{\Omega}\langle f,F(\omega)\rangle\langle UG(\omega),g\rangle d\mu=\langle f,g\rangle.$$
Therefore, $U^*=I_{\h}$.
Hence $U=I_{\h}$.
\end{proof}
\begin{thm}
Assume that  $(F,G)$ and $(F,K)$ are dual pairs for $\h$.
Then for all $\alpha\in\mathbb{C}$, $(F,\alpha G+(1-\alpha)K)$ is a dual pair for $\h$.
\end{thm}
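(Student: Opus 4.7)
The plan is to verify the two requirements of a dual pair in turn: that the mapping $\alpha G+(1-\alpha)K$ is Bessel, and that it satisfies the reconstruction identity when paired with $F$.

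First I would check Besselness. Since $G$ and $K$ are each weakly measurable, so is any pointwise linear combination, and hence $\alpha G+(1-\alpha)K$ is weakly measurable. The synthesis operator clearly satisfies $T_{\alpha G+(1-\alpha)K}=\alpha T_G+(1-\alpha)T_K$ (this is immediate from the weak definition of $T$, using conjugate-linearity in the second argument of the inner product to pull the scalars out). A sum of bounded operators on $L^2(\Omega,\mu)$ is bounded, so $\alpha G+(1-\alpha)K$ is automatically Bessel.

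Next I would verify the dual condition. The cleanest route is to work at the operator level: since $(F,G)$ and $(F,K)$ are dual pairs, we have $T_GT_F^*=I_\h$ and $T_KT_F^*=I_\h$, and then
\begin{equation*}
T_{\alpha G+(1-\alpha)K}T_F^*=\alpha T_GT_F^*+(1-\alpha)T_KT_F^*=\alpha I_\h+(1-\alpha)I_\h=I_\h,
\end{equation*}
which is exactly the dual pair condition. Alternatively (and equivalently), one can expand the defining integral directly: for all $f,g\in\h$,
\begin{equation*}
\int_\Omega\langle f,F(\omega)\rangle\langle(\alpha G+(1-\alpha)K)(\omega),g\rangle\,d\mu(\omega)
\end{equation*}
splits by linearity into $\alpha$ times the $(F,G)$ reconstruction integral plus $(1-\alpha)$ times the $(F,K)$ one, each of which equals $\langle f,g\rangle$, giving $\alpha\langle f,g\rangle+(1-\alpha)\langle f,g\rangle=\langle f,g\rangle$.

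There is no substantive obstacle here; the only small things to be careful about are (a) confirming that $T_{\alpha G+(1-\alpha)K}$ really is $\alpha T_G+(1-\alpha)T_K$ (which requires only the scalar to come out of the second slot of $\langle\cdot,\cdot\rangle$ as itself, not as its conjugate, matching the direction of linearity used in the definition of $T$), and (b) noting that $\alpha$ is allowed to be any complex number because the identity $\alpha+(1-\alpha)=1$ is purely algebraic and requires no convexity assumption.
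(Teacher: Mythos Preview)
Your proposal is correct and follows essentially the same route as the paper: the paper simply expands the defining integral and uses linearity to split it into $\alpha$ times the $(F,G)$ reconstruction plus $(1-\alpha)$ times the $(F,K)$ reconstruction, exactly as in your ``alternatively'' paragraph. Your additional explicit verification that $\alpha G+(1-\alpha)K$ is Bessel is a point the paper leaves implicit, so your write-up is if anything slightly more complete.
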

\begin{proof}
Put $F_1=\alpha G+(1-\alpha)K$. For all $f,g\in\h$, we have
\begin{align*}
\int_{\Omega}\langle f,F(\omega)\rangle
\langle F_1(\omega),g\rangle d\mu(\omega)
& =\int_{\Omega}\langle f,F(\omega)\rangle
\langle \alpha G(\omega)+(1-\alpha)K(\omega),g\rangle d\mu(\omega)\\
& =\alpha\int_{\Omega}\langle f,F(\omega)\rangle\langle G(\omega),g\rangle d\mu(\omega)\\
&\hspace{.5cm}+ (1-\alpha)\int_{\Omega}\langle f,F(\omega)\rangle\langle K(\omega),g\rangle d\mu(\omega)\\
& =\alpha\langle f,g\rangle+(1-\alpha)\langle f,g\rangle\\
& =\langle f,g\rangle.
\end{align*}
\end{proof}
Now, we want to find a relationship between the arbitrary continuous Riesz basis of $\h$.
For this purpose, we need to following definitions and propositions from \cite{arefi}.
Denote by $L^2(\Omega,\h)$
the set of all mapping $F : \Omega\rightarrow\h$ such that for all $f\in\h$,
the functions $\omega\mapsto\langle f,F(\omega)\rangle$ defined almost everywhere on $\Omega$,
belongs to $L^2(\Omega)$.
 \begin{defn} \cite{arefi}
A Bessel mapping $F : \Omega\rightarrow\h$ is called $\mu$-complete, if
$$cspan\{F(\omega)\}_{\omega\in\Omega}=
\left\{\int_{\Omega}\varphi(\omega)F(\omega)d\mu(\omega);\hspace{.5cm}\varphi\in L^2(\Omega)\right\}$$
 is dense in $\h$.
 \end{defn}
It is worthwhile to mention that if  $F$ is $\mu$-complete,
then  $ \{F(\omega)\}_{\omega\in\Omega}$  is a complete subset of $\h$.
The converse is also true when
 $0<\mu(\{\omega\})<+\infty$ for all $\omega\in\Omega$,
since $span\{F(\omega)\}_{\omega\in\Omega}\subseteq cspan\{F(\omega)\}_{\omega\in\Omega}.$
\begin{prop} \cite{arefi}
Let $F$ be a Bessel mapping.
The following are equivalent
\begin{enumerate}
  \item $F$ is $\mu$-complete;
  \item If $f\in\h$ so that $\langle f,F(\omega)\rangle=0$ for almost all $\omega\in\Omega$,
then $f=0$.
  \end{enumerate}
\end{prop}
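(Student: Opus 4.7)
The plan is to reduce the proposition to the standard Hilbert-space fact that for any bounded linear operator $T:X\to\h$ between Hilbert spaces, the range $T(X)$ is dense in $\h$ if and only if its adjoint $T^*$ is injective, since $\overline{T(X)} = (\ker T^*)^{\perp}$. Here I will play this out for the synthesis operator $T_F: L^2(\Omega,\mu)\to\h$ defined in the preliminaries.

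First, I would observe that $cspan\{F(\omega)\}_{\omega\in\Omega}$, as defined before the proposition, is exactly the range of the synthesis operator $T_F$, because $T_F\varphi = \int_{\Omega}\varphi(\omega)F(\omega)d\mu(\omega)$ (understood weakly, which is how $T_F$ was introduced). Hence $F$ is $\mu$-complete precisely when $\overline{T_F(L^2(\Omega,\mu))}=\h$.

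Next I would invoke the identity $\overline{T_F(L^2(\Omega,\mu))} = (\ker T_F^*)^{\perp}$, so $\mu$-completeness is equivalent to $\ker T_F^* = \{0\}$, i.e. to injectivity of $T_F^*$. Since the analysis operator acts by $(T_F^* f)(\omega) = \langle f, F(\omega)\rangle$ as an element of $L^2(\Omega,\mu)$, the statement $T_F^* f = 0$ means exactly that $\langle f, F(\omega)\rangle = 0$ for $\mu$-almost every $\omega\in\Omega$. Therefore injectivity of $T_F^*$ translates verbatim into condition (2), and the equivalence $(1)\Leftrightarrow(2)$ follows.

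There is no real obstacle: the proof is a single application of the closed-range-style identity for adjoints. The only minor point of care is to recognize that the vanishing $T_F^*f=0$ in $L^2(\Omega,\mu)$ is the a.e.\ vanishing of $\omega\mapsto\langle f,F(\omega)\rangle$, so that the ``almost all $\omega$'' in (2) matches the Hilbert-space nullspace condition correctly; the Bessel hypothesis is used only to guarantee that $T_F$ (and hence $T_F^*$) is a well-defined bounded operator so that the adjoint identity applies.
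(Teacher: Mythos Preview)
Your argument is correct: identifying $cspan\{F(\omega)\}$ with the range of $T_F$, invoking $\overline{\operatorname{Ran} T_F} = (\ker T_F^*)^\perp$, and translating $\ker T_F^* = \{0\}$ into the almost-everywhere vanishing condition is exactly the right chain of equivalences, and you are careful about the a.e.\ interpretation. Note, however, that the paper does not supply its own proof of this proposition --- it is quoted from \cite{arefi} without proof --- so there is nothing in the paper to compare your approach against.
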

\begin{defn} \cite{arefi}
A mapping $F\in L^2(\Omega,\h)$ is called a continuous Riesz base for $\h$
with respect to $(\Omega,\mu)$,
if $\{F(\omega)\}_{\omega\in\Omega}$ is $\mu$-complete and
there are two positive numbers $A$ and $B$ such that
$$A\left(\int_{\Omega_1}|\phi(\omega)|^2 d\mu(\omega)\right)^{\frac{1}{2}}\leq
\left\|\int_{\Omega_1}\phi(\omega)F(\omega)d\mu(\omega)\right\|
\leq B\left(\int_{\Omega_1}|\phi(\omega)|^2 d\mu(\omega)\right)^{\frac{1}{2}},$$
for every $\phi\in L^2(\Omega)$ and
any measurable subset $\Omega_1$ of $\Omega$ with $\mu(\Omega_1)<+\infty$.
The integral is taken in the weak sense and
the constant $A$ and $B$ are called continuous Riesz base bounds.
It is obvious that any continuous Riesz basis is a continuous frame.
\end{defn}
\begin{defn} \cite{arefi}
A Bessel mapping $F$ is said to be $L^2$-independent if\\
 $\int_{\Omega} \varphi(\omega)F(\omega)d\mu(\omega)=0$
for $\varphi\in L^2(\Omega,\mu)$, implies that $\varphi=0$ a. e.
\end{defn}
\begin{prop} \cite{arefi}
Let $F\in L^2(\Omega,\h)$ be a continuous frame for $\h$.
Then $F$ is a continuous Riesz base for $\h$ if and only if $F$ is $\mu$-complete and $L^2$-independent.
\end{prop}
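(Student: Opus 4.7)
The plan is to translate everything into statements about the synthesis operator $T_F:L^2(\Omega,\mu)\to\h$. In this language, the two-sided Riesz inequality, applied to characteristic cutoffs of $L^2$ functions, is equivalent to saying that $T_F$ is bounded above and below on $L^2(\Omega,\mu)$, while the Bessel part of the frame property already supplies the upper bound $\|T_F\varphi\|\le\sqrt{B_F}\,\|\varphi\|_2$. Both directions therefore reduce to understanding the lower bound.

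For the forward implication, $\mu$-completeness is part of the definition of a continuous Riesz base, so it suffices to derive $L^2$-independence. Given $\varphi\in L^2(\Omega,\mu)$ with $\int_\Omega \varphi(\omega)F(\omega)\,d\mu(\omega)=0$, I would exhaust the (automatically $\sigma$-finite) essential support of $\varphi$ by sets $\Omega_n$ of finite measure, put $\varphi_n:=\varphi\chi_{\Omega_n}$, and observe that the continuity of $T_F$ gives $T_F\varphi_n\to T_F\varphi=0$, while the Riesz lower bound on each $\Omega_n$ forces $\|\varphi_n\|_2\to 0$. Hence $\varphi=0$ almost everywhere, which is exactly $L^2$-independence.

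For the converse, I would start by noting that the frame inequality says $T_F^*$ is bounded below, so $T_F^*$ has trivial kernel and closed range; equivalently, $T_F$ is surjective onto $\h$. By the preceding proposition characterising $\mu$-completeness as injectivity of $T_F^*$, this in fact holds automatically, and the assumption of $\mu$-completeness does no extra work. The $L^2$-independence hypothesis is precisely the injectivity of $T_F$. Thus $T_F$ is a bounded bijection between Hilbert spaces, so the open mapping theorem produces a bounded inverse and the estimate
$$\|\varphi\|_2 \le \|T_F^{-1}\|\,\|T_F\varphi\|, \qquad \varphi\in L^2(\Omega,\mu).$$
Specialising to $\varphi$ supported in a finite-measure set $\Omega_1$ recovers the required Riesz lower inequality with constant $A=1/\|T_F^{-1}\|$, while the Bessel bound gives the upper inequality with $B=\sqrt{B_F}$.

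The main obstacle is mostly bookkeeping: one has to match up carefully ``continuous frame'' with surjectivity of $T_F$, ``$L^2$-independent'' with injectivity of $T_F$, and verify that the Riesz estimates really do lift from functions of finite-measure support to all of $L^2(\Omega,\mu)$ by a density and continuity argument. Once these translations are in place, the open-mapping step itself is essentially a one-liner, and the whole argument is a functional-analytic reformulation of the characterisation.
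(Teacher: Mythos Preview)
The paper does not supply its own proof of this proposition: it is quoted verbatim from \cite{arefi} and stated without argument, like the other results attributed to that reference in Section~3. So there is no in-paper proof to compare against.

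That said, your proposed argument is sound and is essentially the standard one. Your dictionary---$\mu$-completeness $\Leftrightarrow$ injectivity of $T_F^*$, $L^2$-independence $\Leftrightarrow$ injectivity of $T_F$, continuous frame $\Leftrightarrow$ $T_F^*$ bounded below (hence $T_F$ onto)---is exactly right, and once in place the open mapping theorem immediately gives the lower Riesz bound, while the Bessel condition gives the upper one. Your observation that $\mu$-completeness is automatic for a continuous frame is also correct: the lower frame inequality forces $T_F^*$ to be injective. The $\sigma$-finiteness of the essential support of an $L^2$ function, needed in the forward direction, is the usual Chebyshev argument $\mu(\{|\varphi|>1/n\})\le n^2\|\varphi\|_2^2$, and the passage from $\|\varphi_n\|_2\to 0$ to $\varphi=0$ is monotone convergence. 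One cosmetic point: in the forward direction you could bypass the exhaustion argument entirely by noting that the Riesz inequality on finite-measure sets plus boundedness of $T_F$ already extends to all of $L^2$ by density, giving $A\|\varphi\|_2\le\|T_F\varphi\|$ directly; then $T_F\varphi=0$ forces $\varphi=0$ in one line.
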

\begin{prop} \cite{arefi}
Let $F\in L^2(\Omega,\h)$ be a continuous frame.
The following are equivalent
\begin{enumerate}
  \item $F$ is a continuous Riesz base for $\h$;
  \item $F$ is a  Riesz-type continuous frame for $\h$;
   \item $T^*_F$ is onto.
  \end{enumerate}
\end{prop}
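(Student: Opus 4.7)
The plan is to establish the three equivalences by proving (2)$\Leftrightarrow$(3) and (1)$\Leftrightarrow$(3), leaning on the earlier characterization that a continuous Riesz basis is precisely a $\mu$-complete, $L^2$-independent continuous frame. The equivalence (2)$\Leftrightarrow$(3) is essentially recorded already in Section 3: a continuous frame is Riesz-type, by definition, when it admits a unique dual, and this was noted there to coincide with surjectivity of the analysis operator $T_F^*$. So the substantive content is (1)$\Leftrightarrow$(3).

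For (1)$\Rightarrow$(3), I would suppose $F$ is a continuous Riesz basis. By the preceding proposition $F$ is $L^2$-independent, which---unpacking the weak definition of the synthesis operator---is exactly the assertion that $T_F$ is injective on $L^2(\Omega,\mu)$. Since $F$ is also a continuous frame, the frame operator $S_F = T_F T_F^*$ is invertible, and writing $f = T_F(T_F^* S_F^{-1} f)$ exhibits $T_F$ as surjective onto $\h$. Hence $T_F$ is a bounded bijection; by the open mapping theorem it is a topological isomorphism, and its adjoint $T_F^*$ is therefore an isomorphism, in particular onto.

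For (3)$\Rightarrow$(1), assume $T_F^*$ is onto. The standard Hilbert-space identity
$$\ker T_F = (\operatorname{range} T_F^*)^{\perp} = \h^{\perp} = \{0\}$$
forces $T_F$ to be injective, i.e., $F$ is $L^2$-independent. Moreover the lower frame inequality shows that $\langle f,F(\omega)\rangle = 0$ a.e.\ implies $f=0$, so by the earlier equivalent criterion $F$ is $\mu$-complete. The preceding proposition then delivers that $F$ is a continuous Riesz basis.

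The only point that might require care---and really the main obstacle---is reconciling the definitional ``$L^2$-independence'' with the operator-theoretic statement ``$T_F$ is injective''. Since $T_F\varphi$ is defined weakly by $\int_\Omega \varphi(\omega)F(\omega)\,d\mu(\omega)$, the two conditions coincide modulo the a.e.\ identification inherent in $L^2(\Omega,\mu)$; once this is spelled out, the rest of the argument is bookkeeping built out of the propositions already quoted from \cite{arefi}, together with the basic fact that the synthesis operator of a continuous frame is automatically surjective because $S_F$ is invertible.
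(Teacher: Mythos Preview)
The paper does not prove this proposition; it is quoted without proof from \cite{arefi}, so there is no argument in the paper to compare against. Your proof is correct and fills the gap cleanly: the equivalence (2)$\Leftrightarrow$(3) is indeed already recorded earlier in Section~3, and your treatment of (1)$\Leftrightarrow$(3) via the characterization ``continuous Riesz basis $=$ $\mu$-complete $+$ $L^2$-independent continuous frame'' together with $\ker T_F = (\operatorname{range} T_F^*)^{\perp}$ is the natural route and works as written.
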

\hspace{-.5cm}
\begin{thm}
Let $F$ and $G$ be two continuous Riesz bases for $\h$.
Then there exists an invertible operator $\Theta\in L(\h)$ such that $G=S_G\Theta^*F.$
 \end{thm}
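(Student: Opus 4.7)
My plan is to construct $\Theta$ explicitly as a composition built from the synthesis operators of $F$ and $G$ together with $S_G^{-1}$. The first order of business is to extract from the hypothesis that the synthesis operators $T_F, T_G : L^2(\Omega,\mu) \to \h$ are invertible. Indeed, by the last cited proposition, $F$ being a continuous Riesz basis is equivalent to $F$ being Riesz-type, i.e.\ $T_F^*$ is surjective onto $L^2(\Omega,\mu)$. Since the lower frame inequality gives $\|T_F^* f\|^2 \geq A\|f\|^2$, $T_F^*$ is also bounded below, hence injective; thus $T_F^* : \h \to L^2(\Omega,\mu)$ is a Banach-space isomorphism, and by duality $T_F$ is likewise invertible. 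The same reasoning applies to $G$.

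With these invertibilities in hand, I would set
$$\Theta := (T_F^*)^{-1}\, T_G^*\, S_G^{-1} \in L(\h).$$
As a composition of invertible bounded operators, $\Theta$ is itself invertible, with explicit inverse $S_G (T_G^*)^{-1} T_F^*$. Taking adjoints yields the convenient identity
$$S_G \Theta^* = T_G T_F^{-1}.$$

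To conclude, I would verify the desired equality $G(\omega) = S_G \Theta^* F(\omega)$ in the weak-almost-everywhere sense that is natural for weakly measurable mappings. For arbitrary $h \in \h$ and $\omega \in \Omega$ (outside a null set possibly depending on $h$),
$$\langle h, S_G \Theta^* F(\omega)\rangle = \langle (T_G T_F^{-1})^* h, F(\omega)\rangle = \bigl(T_F^* (T_F^*)^{-1} T_G^* h\bigr)(\omega) = (T_G^* h)(\omega) = \langle h, G(\omega)\rangle,$$
where in the middle step I use the defining identity $(T_F^*\xi)(\omega) = \langle \xi, F(\omega)\rangle$.

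The only genuinely delicate point is the invertibility of $T_F$ and $T_G$, and this is where the Riesz-basis hypothesis is used in full strength via the characterization of continuous Riesz bases as Riesz-type continuous frames. Once that is established, the remainder is a formal manipulation of adjoints, with no analytic content. A subtlety worth flagging is that the exceptional null set in the final verification depends on $h$, but this is the standard notion of equality for weakly measurable mappings and matches the framework the paper has been using throughout.
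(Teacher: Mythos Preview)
Your proof is correct, but it takes a different route from the paper. The paper never asserts that $T_F$ or $T_G$ is individually invertible as a map between $L^2(\Omega,\mu)$ and $\h$; instead it works directly with the composite $T_GT_F^*:\h\to\h$, showing it is injective via the $L^2$-independence of $G$ and the $\mu$-completeness of $F$, and surjective because $T_F^*$ and $T_G$ are each onto. It then sets $\Theta=(T_GT_F^*)^{-1}$, observes that $\Theta^*F$ is a dual of $G$, and invokes the uniqueness of duals for Riesz-type frames to conclude $\Theta^*F=S_G^{-1}G$. Your argument is more direct: you extract from the Riesz-type characterization the stronger fact that $T_F^*$ is a bijection (surjective by Proposition~3.17, bounded below by the lower frame inequality), hence $T_F$ is invertible outright, and then build $\Theta=(T_F^*)^{-1}T_G^*S_G^{-1}$ and verify $G=S_G\Theta^*F$ by a straightforward adjoint computation, bypassing the uniqueness-of-dual lemma entirely. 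The two constructions in fact yield the same $\Theta$, since $S_G^{-1}T_G=(T_G^*)^{-1}$ when $T_G$ is invertible. Your approach is computationally cleaner; the paper's has the conceptual payoff of illustrating why uniqueness of duals characterizes continuous Riesz bases.
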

 \begin{proof}
Assume $f\in\h$ such that $(T_GT_F^*)f=0$.
We have for all $\omega\in\Omega$,   $T_G(T_F^*f(\omega))=0$.
Hence $\int_{\Omega}\langle f,F(\omega)\rangle G(\omega) d\mu(\omega)=0$.
Since $G$ is $L^2$-independent,
then $\langle f,F(\omega)\rangle=0$ for almost all $\omega\in\Omega$.
The $\mu$-completeness of $F$ implies $f=0$.
Therefore $T_GT_F^*$ is one to one.
According to Proposition 3.17, $T_F^*$ is onto,
because $F$ is a continuous Riesz base.
The synthesis $T_G$ is onto because $G$ is a continuous frame \cite{dr2}.
Hence, $T_GT_F^*$ is onto.
Putting $\Theta=(T_GT_F^*)^{-1}$, for any $f,g\in\h$,
\begin{align*}
\langle f,g\rangle
& = \langle\Theta^{-1}\Theta f,g\rangle\\
& = \langle T_F^*\Theta f,T_G^*g\rangle\\
& = \int_{\Omega}\langle\Theta f,F(\omega)\rangle\langle G(\omega),g\rangle d\mu(\omega)\\
& = \int_{\Omega}\langle f,\Theta^*F(\omega)\rangle\langle G(\omega),g\rangle d\mu(\omega).
\end{align*}
It follows that $\Theta^*F$ is a dual of $G$,
but $G$ has only one dual.
Hence $S_G^{-1}G=\Theta^*F$, or $G=S_G\Theta^*F$.
\end{proof}
\begin{defn} \cite{arefi}
A continuous orthonormal basis for $\h$ with respect to $(\Omega,\mu)$ is a
continuous Parseval frame $F$ for which
 $$\left\|\int_{\Omega}\phi(\omega)F(\omega)d\mu(\omega)\right\|=\|\phi\|_2, \hspace{.3cm}\phi\in L^2(\Omega).$$
\end{defn}
One may easily see that if $F$ is a continuous orthonormal bases for $\h$,
then it is a continuous Riesz base.
\begin{cor}
If $F$ and $G$ are two continuous orthonorml basis for $\h$,
then there exists an invertible operator $\Theta\in L(\h)$ such that $G=\Theta^*F.$
\end{cor}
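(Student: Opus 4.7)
The plan is to derive this corollary as a direct specialization of the preceding theorem. Recall that the remark right after Definition 3.20 states that every continuous orthonormal basis is a continuous Riesz base; so both $F$ and $G$ qualify as continuous Riesz bases for $\h$, and the preceding theorem applies to yield an invertible operator $\Theta\in L(\h)$ with $G=S_G\Theta^*F$. It therefore suffices to verify that $S_G=I_{\h}$.

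For this step I would invoke the Parseval hypothesis built into the definition of a continuous orthonormal basis. Being a Parseval continuous frame means $\int_{\Omega}|\langle f,G(\omega)\rangle|^2 d\mu(\omega)=\|f\|^2$ for every $f\in\h$, which by polarization is equivalent to $\langle S_G f,f\rangle=\langle f,f\rangle$ on the real/imaginary parts. Since $S_G$ is self-adjoint and positive, this forces $S_G=I_{\h}$.

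Substituting $S_G=I_{\h}$ into the relation $G=S_G\Theta^*F$ provided by the theorem gives $G=\Theta^*F$, as required. The same argument of course yields $S_F=I_{\h}$, which one can use to observe that $\Theta=(T_GT_F^*)^{-1}$ is automatically a unitary (since both synthesis operators are isometries from $L^2(\Omega,\mu)$ onto $\h$ under the orthonormality assumption), but this refinement is not needed for the stated conclusion.

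Honestly I do not see a genuine obstacle here: the only mildly delicate point is remembering that the Parseval property built into the definition of continuous orthonormal basis immediately identifies the frame operator with the identity, so no computation beyond a one-line appeal to the previous theorem is required.
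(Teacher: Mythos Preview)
Your proposal is correct and is precisely the argument the paper has in mind: the corollary is stated without proof because it follows immediately from the preceding theorem together with the observation (implicit in the Parseval hypothesis of Definition~3.19) that $S_G=I_{\h}$. Your extra remark that $\Theta$ is in fact unitary is a nice bonus but, as you note, not required for the stated conclusion.
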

\section{Approximate duality of continuous frames}
In Section 3, by definition of the dual frame, we saw that
if the Bessel mapping $G$ is a dual of Bessel mapping $F$,
then for all arbitrary elements $f,g\in\h$,
we have the dual frame expansion
$\langle f,g\rangle=\int_{\Omega}\langle f,F(\omega)\rangle\langle G(\omega),g\rangle d\mu(\omega)$.
Unfortunately, it might be difficult, or even impossible,
to calculate a dual frame explicitly.
This limitation leads us to seek continuous frames that are "close to dual".
For solving this problem in discrete frames,  Christensen and Laugesen in \cite{olela} introduced the concept of \textbf{approximately dual frames}. By using their ideas in this section, we investigate and improve this notion for continuous frames. Here, we are generalizing the definition 3.1. of \cite{olela} to continuous cases and then we will obtain  our results. For clarifying some examples will presented.\par
Throughout this section, we assume that $F$ and $G$ are Bessel mappings
with synthesis operators $T_F$ and $T_G$, respectively.
\begin{defn}
Two Bessel mappings $F$ and $G$ are called  approximately dual continuous frames for $\h$ if
$\|I_{\h}-T_GT_F^*\|<1$ or $\|I_{\h}-T_FT_G^*\|<1$.
\end{defn}
 It is clear that in this case, $T_GT_F^*$ is an invertible operator.
\begin{exa}
Similar to Example \ref{3.3}, let $\h=\mathbb{R}^2$ and $\{e_1,e_2\}$ be standard base for it.
Also, let $0<\varepsilon<1$ be arbitrary.
Put $\Omega=B_{\mathbb{R}^2}$ and $\lambda$ is the Lebesgue measure.
Define the continuous frames $F$ and $G$ for $\mathbb{R}^2$ with respect to $(B_{\mathbb{R}^2},\lambda)$ by
\[
F(\omega)=\left\{\begin{array}{ll}
\frac{1}{\sqrt{\lambda(B_1)}}e_1,\hspace{.5cm}\omega\in B_1,\cr
\frac{1}{\sqrt{\lambda(B_2)}}e_2,\hspace{.5cm}\omega\in B_2,\cr
0,\hspace{1.85cm}\omega\in B_3,\cr
\end{array}\right.
\]
and
\[
G(\omega)=\left\{\begin{array}{ll}
\frac{\varepsilon}{\sqrt{\lambda(B_1)}}e_1,\hspace{1.05cm}\omega\in B_1,\cr
0,\hspace{2.4cm}\omega\in B_2,\cr
\frac{1}{\sqrt{\lambda(B_3)}}e_2,\hspace{1.06cm}\omega\in B_3,\cr
\end{array}\right.
\]
where $\{B_1,B_2,B_3\}$ is a partition of $B_{\mathbb{R}^2}$.
For all $x\in\mathbb{R}^2$
\begin{align*}
(T_GT_F^*)(x)
& = \int_{B_{\mathbb{R}^2}}(T_F^*x)(\omega)G(\omega)d\lambda(\omega)\\
& = \left(\int_{B_1}+\int_{B_2}+\int_{B_3}\right)(T_F^*x)(\omega)G(\omega)d\lambda(\omega)\\
& = \int_{B_1}\langle x,\frac{\varepsilon}{\sqrt{\lambda(B_1)}}e_1\rangle\frac{1}{\sqrt{\lambda(B_1)}}e_1d\lambda(\omega)+0+0\\
& = \varepsilon\langle x,e_1\rangle e_1.
\end{align*}
Thus
$$\|x-(T_GT_F^*)(x)\|^2=(1-\varepsilon)^2|\langle x,e_1\rangle|^2+|\langle x,e_2\rangle|^2 < \|x\|^2.$$
Therefore, $F$ and $G$ are approximately dual continuous frames for $\mathbb{R}^2$.
\end{exa}
The following theorem shows that a Bessel mapping $F$
is a continuous frame for $\h$ with respect to $(\Omega,\mu)$,
if there exists a Bessel mapping $G$ such that
$F$ and $G$ are approximately dual frames.
\begin{thm}
If $F$ and $G$ are approximately dual continuous frames,
then  both $F$ and $G$ are continuous frames for $\h$ with respect to $(\Omega,\mu)$.
\end{thm}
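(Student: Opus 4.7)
The plan is to observe that the upper frame bounds for $F$ and $G$ are already built into the hypothesis, since we are told that both $F$ and $G$ are Bessel mappings. So the only real content is producing lower frame bounds, and this will follow from the invertibility of $T_GT_F^*$ and $T_FT_G^*$ that the approximate-duality inequality supplies via a Neumann-series argument.

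First I would note that whichever of the two inequalities $\|I_\h - T_GT_F^*\|<1$ or $\|I_\h - T_FT_G^*\|<1$ holds, the operator in question is invertible on $\h$ by the standard Neumann series expansion of the inverse of $I_\h - (I_\h - T_GT_F^*)$. Since $T_FT_G^* = (T_GT_F^*)^*$, invertibility of one of these operators automatically gives invertibility of the other (an invertible bounded operator on a Hilbert space has invertible adjoint, with the same norm of the inverse). So without loss of generality both $T_GT_F^*$ and $T_FT_G^*$ are invertible.

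Next, to get the lower frame bound for $F$, I would chain the obvious operator estimates: for every $f \in \h$,
\begin{align*}
\|f\|^2 &= \bigl\|(T_GT_F^*)^{-1}(T_GT_F^*)f\bigr\|^2
 \leq \bigl\|(T_GT_F^*)^{-1}\bigr\|^2 \,\|T_GT_F^*f\|^2 \\
&\leq \bigl\|(T_GT_F^*)^{-1}\bigr\|^2 \,\|T_G\|^2\,\|T_F^*f\|^2
 \leq \bigl\|(T_GT_F^*)^{-1}\bigr\|^2 B_G \int_\Omega |\langle f,F(\omega)\rangle|^2\, d\mu(\omega),
\end{align*}
where $B_G$ is the Bessel bound of $G$. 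This yields the lower continuous frame bound $A_F := \bigl(\|(T_GT_F^*)^{-1}\|^2 B_G\bigr)^{-1}$ for $F$. Combined with the Bessel bound $B_F$ already assumed, $F$ is a continuous frame.

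The same argument, run with the roles of $F$ and $G$ interchanged (i.e., using the invertibility of $T_FT_G^*$ and the Bessel bound $B_F$), gives the lower bound $A_G := \bigl(\|(T_FT_G^*)^{-1}\|^2 B_F\bigr)^{-1}$ for $G$. There is no genuine obstacle here; the only point worth stating carefully is the symmetry step that lets us use both invertibilities from the single ``or'' in the hypothesis, and this is just $T_FT_G^* = (T_GT_F^*)^*$.
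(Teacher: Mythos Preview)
Your proof is correct and follows essentially the same route as the paper: use the Neumann-series invertibility of $T_GT_F^*$ to write $\|f\|\le \|(T_GT_F^*)^{-1}\|\,\|T_G\|\,\|T_F^*f\|$, then bound $\|T_G\|^2\le B_G$ and recognize $\|T_F^*f\|^2=\int_\Omega|\langle f,F(\omega)\rangle|^2\,d\mu(\omega)$. The only cosmetic differences are that the paper unfolds the operator estimate into an explicit integral plus Cauchy--Schwarz rather than citing $\|T_G\|\le\sqrt{B_G}$ directly, and that you treat the ``or'' in the hypothesis more carefully via the adjoint observation $T_FT_G^*=(T_GT_F^*)^*$, which the paper glosses over.
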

\begin{proof}
Since $\|I_{\h}-T_GT_F^*\|<1$, the operator
$T_GT_F^*$ is an invertible operator on $\h$ and
 $$\|(T_GT_F^*)^{-1}\|\leq\frac{1}{1-\|I_{\h}-T_GT_F^*\|}.$$
Denote by $B_F$ and $B_G$ the Bessel constants of $F$ and $G$, respectively.
For all $f\in\h$
\begin{align*}
\|f\|
& \leq\|(T_GT_F^*)^{-1}\|\|T_GT_F^*f\|\\
& \leq\frac{1}{1-\|I_{\h}-T_GT_F^*\|}\sup_{\|g\|=1}|\langle T_GT_F^*f,g\rangle|\\
& \leq\frac{1}{1-\|I_{\h}-T_GT_F^*\|}\sup_{\|g\|=1}\int_{\Omega}|\langle f,F(\omega) \rangle\langle G(\omega),g \rangle|d\mu(\omega)\\
& \leq\frac{1}{1-\|I_{\h}-T_GT_F^*\|}\sup_{\|g\|=1}\left(\int_{\Omega}|\langle f,F(\omega) \rangle|^2 d\mu(\omega)\right)^{\frac{1}{2}}\left(\int_{\Omega}|\langle G(\omega),g\rangle|^2 d\mu(\omega)\right)^{\frac{1}{2}}\\
& \leq\frac{1}{1-\|I_{\h}-T_GT_F^*\|}\sup_{\|g\|=1}\left(\int_{\Omega}|\langle f,F(\omega) \rangle|^2 d\mu(\omega)\right)^{\frac{1}{2}}\sqrt{B_G}~\|g\|\\
& =\frac{\sqrt{B_G}}{1-\|I_{\h}-T_GT_F^*\|}\left(\int_{\Omega}|\langle f,F(\omega)\rangle|^2 d\mu(\omega)\right)^{\frac{1}{2}}.
\end{align*}
Hence $F$ is a continuous frame with lower bound $B_G^{-1}(1-\|I_{\h}-T_GT_F^*\|)^2$.
Similary, $G$ is a continuous frame with lower bound $B_F^{-1}(1-\|I_{\h}-T_GT_F^*\|)^2$.
\end{proof}
The following theorem shows that the sum of two approximately dual continuous frames is a continuous frame.
\begin{thm}
If $F$ and $G$ are approximately dual continuous frames,
then $F+G$ is a continuous frame for $\h$ with respect to $(\Omega,\mu)$.
\end{thm}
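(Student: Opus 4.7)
The plan is to verify the two frame inequalities for $F+G$ directly, using the synthesis/analysis operator formalism. Weak measurability of $F+G$ is immediate from that of $F$ and $G$, and the synthesis and analysis operators are $T_{F+G}=T_F+T_G$ and $T_{F+G}^{*}=T_F^{*}+T_G^{*}$, so everything reduces to analysing $\|T_F^{*}f+T_G^{*}f\|_{L^2(\Omega,\mu)}^{2}$.

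The upper (Bessel) bound is the routine step: apply $|a+b|^{2}\le 2|a|^{2}+2|b|^{2}$ pointwise under the integral and use the Bessel constants $B_F,B_G$ of $F$ and $G$ to obtain
\[
\int_{\Omega}|\langle f,F(\omega)+G(\omega)\rangle|^{2}\,d\mu(\omega)\le 2(B_F+B_G)\|f\|^{2}.
\]

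The main obstacle, and the only place where the approximate duality hypothesis is needed, is the lower bound. The key identity to exploit is
\[
T_{F+G}T_{F+G}^{*}=S_F+S_G+T_GT_F^{*}+T_FT_G^{*},
\]
and since $T_FT_G^{*}=(T_GT_F^{*})^{*}$, taking inner products with $f$ gives
\[
\int_{\Omega}|\langle f,F(\omega)+G(\omega)\rangle|^{2}d\mu(\omega)=\langle S_Ff,f\rangle+\langle S_Gf,f\rangle+2\,\mathrm{Re}\langle T_GT_F^{*}f,f\rangle.
\]
The first two terms are nonnegative because $S_F,S_G\ge 0$. For the cross term, write $T_GT_F^{*}=I_{\h}-(I_{\h}-T_GT_F^{*})$ and use Cauchy--Schwarz together with the hypothesis $\|I_{\h}-T_GT_F^{*}\|<1$ to conclude
\[
\mathrm{Re}\langle T_GT_F^{*}f,f\rangle\ge\bigl(1-\|I_{\h}-T_GT_F^{*}\|\bigr)\|f\|^{2}.
\]
Combining these estimates yields a lower frame bound
\[
A=2\bigl(1-\|I_{\h}-T_GT_F^{*}\|\bigr)>0
\]
for $F+G$, which completes the argument. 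If instead the definition is activated through $\|I_{\h}-T_FT_G^{*}\|<1$, the same computation applies verbatim after interchanging the roles of $F$ and $G$, so no separate case analysis is required.
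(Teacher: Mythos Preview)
Your argument is correct and follows the same overall strategy as the paper: expand
\[
\int_{\Omega}|\langle f,(F+G)(\omega)\rangle|^{2}\,d\mu(\omega)
=\langle S_Ff,f\rangle+\langle S_Gf,f\rangle+\langle (T_GT_F^{*}+T_FT_G^{*})f,f\rangle
\]
and control the cross term. The difference lies only in how that cross term is handled and which lower bound is extracted. The paper observes that $\|2I_{\h}-(T_GT_F^{*}+T_FT_G^{*})\|\le \|I_{\h}-T_GT_F^{*}\|+\|I_{\h}-T_FT_G^{*}\|<2$ and invokes a spectral lemma (Murphy, Lemma~2.2.2) to conclude that the self-adjoint operator $T_GT_F^{*}+T_FT_G^{*}$ is positive; it then keeps the terms $\langle S_Ff,f\rangle+\langle S_Gf,f\rangle$ and appeals to the preceding theorem (that $F$ and $G$ are individually continuous frames) to obtain the lower bound $A_F+A_G$. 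You instead bound $2\,\mathrm{Re}\langle T_GT_F^{*}f,f\rangle\ge 2(1-\|I_{\h}-T_GT_F^{*}\|)\|f\|^{2}$ directly via Cauchy--Schwarz and discard the $S_F,S_G$ terms, yielding the lower bound $2(1-\|I_{\h}-T_GT_F^{*}\|)$. Your route is more self-contained (no external lemma, no reliance on the earlier theorem), while the paper's route produces the alternative constant $A_F+A_G$; in fact combining both gives the sharper bound $A_F+A_G+2(1-\|I_{\h}-T_GT_F^{*}\|)$. Your Bessel constant $2(B_F+B_G)$ is also safe; the paper's stated constant $B_F+B_G$ is not justified as written, though $(\sqrt{B_F}+\sqrt{B_G})^{2}$ would be.
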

\begin{proof}
Suppose $T_F$ and $T_G$ be synthesis operators of $F$ and $G$ (respectively)
and $A_F, B_F, A_G$ and $B_G$ be lower and upper bounds of $F$ and $G$, respectively.
It is clear that $F+G$ is a Bessel mapping with Bessel constant $B_F+B_G$.
Since  $\|I_{\h}-T_GT_F^*\|<1$ or $\|I_{\h}-T_FT_G^*\|<1$,
then $\|2I_{\h}-(T_GT_F^*+T_FT_G^*)\|<2$,
and as $T_GT_F^*+T_FT_G^*$ is self-adjoint, then by Lemma 2.2.2 in \cite{mur},
$T_GT_F^*+T_FT_G^*$ is a positive operator.
For each $f\in\h$, we have
\begin{align*}
\int_{\Omega}|\langle f,(F+G)(\omega) \rangle|^2 d\mu(\omega)
& =  \int_{\Omega}|\langle f,F(\omega) \rangle|^2 d\mu(\omega)+\langle (T_GT_F^*+T_FT_G^*)f,f\rangle\\
&\hspace{.5cm}+\int_{\Omega}|\langle f,G(\omega)\rangle|^2 d\mu(\omega)\\
& \geq \int_{\Omega}|\langle f,F(\omega) \rangle|^2 d\mu(\omega)+\int_{\Omega}|\langle f,G(\omega)\rangle|^2 d\mu(\omega)\\
& \geq (A_F+A_G)\|f\|^2,
\end{align*}
i.e., the lower bound condition holds.
\end{proof}
It is clear that if $(F,G)$ is a dual pair for $\h$,
then $F$ and $G$ are approximately dual continuous frames,
but its converse isn't true in general.
The following theorem shows that
any approximate dual continuous frames is a kind of dual pair (as special sense).
\begin{thm}
If $F$ and $G$ are approximately dual continuous frames,
then there exists an invertible operator  $\Theta:\h\to\h$ such that
$(\Theta^* F,G)$ is a dual pair for $\h$.
\end{thm}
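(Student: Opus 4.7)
The plan is to exhibit $\Theta$ explicitly as the inverse of $T_G T_F^*$, and then verify that after twisting $F$ by $\Theta^*$ the composition with $T_G^*$ becomes the identity.

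First I would observe that the hypothesis $\|I_\h - T_G T_F^*\| < 1$ forces $T_G T_F^*$ to be an invertible bounded operator on $\h$ via the Neumann series (in the case the other inequality $\|I_\h - T_F T_G^*\| < 1$ is what is assumed, the same conclusion follows by passing to adjoints, since $(T_G T_F^*)^* = T_F T_G^*$). Define
\[
\Theta := (T_G T_F^*)^{-1} \in L(\h),
\]
which is invertible by construction. Next, I would recall the elementary fact (already used in the proof of Theorem 3.9) that if $F$ is a Bessel mapping and $U \in L(\h)$, then $UF$ is again a Bessel mapping with synthesis operator $T_{UF} = U T_F$; applying this to $U = \Theta^*$ shows that $\Theta^* F$ is Bessel, with
\[
T_{\Theta^* F} = \Theta^* T_F, \qquad T_{\Theta^* F}^{\,*} = T_F^*\, \Theta.
\]

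Then I would directly compute, for all $f, g \in \h$,
\[
T_G\, T_{\Theta^* F}^{\,*} = T_G T_F^* \Theta = T_G T_F^* (T_G T_F^*)^{-1} = I_\h,
\]
which by the remark following Definition 3.1 is exactly the statement that $(\Theta^* F, G)$ is a dual pair for $\h$. Since $G$ is Bessel by hypothesis and $\Theta^* F$ is Bessel by the previous step, both sides of the duality relation are well-defined Bessel mappings, and Corollary 3.5 moreover guarantees that both are in fact continuous frames.

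There is no real obstacle here beyond producing the right candidate for $\Theta$; the whole argument rests on recognizing that approximate duality is, by definition, exactly the invertibility of $T_G T_F^*$, and that twisting $F$ on the left by an operator $U$ amounts to multiplying $T_F^*$ on the right by $U^*$. The minor verification that $\Theta^* F$ remains Bessel is the only technical point, and it is immediate from boundedness of $\Theta^*$.
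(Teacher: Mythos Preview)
Your proof is correct and follows essentially the same approach as the paper: both take $\Theta = (T_G T_F^*)^{-1}$ and verify the dual pair condition, with the paper writing out the integral $\langle f,g\rangle = \int_\Omega \langle f,\Theta^* F(\omega)\rangle\langle G(\omega),g\rangle\, d\mu$ directly while you phrase the same computation as $T_G T_{\Theta^* F}^* = T_G T_F^* \Theta = I_\h$ via the identity $T_{UF} = U T_F$. Your version is slightly more careful in explicitly noting that $\Theta^* F$ is Bessel and in handling the alternative hypothesis $\|I_\h - T_F T_G^*\| < 1$ by passing to adjoints.
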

\begin{proof}
Since $\|I_{\h}-T_GT_F^*\|<1$, then $T_GT_F^*$ is an invertible operator on $\h$.
For each $f,g\in\h$
\begin{align*}
\langle f,g\rangle
& = \langle(T_GT_F^*)(T_GT_F^*)^{-1}f,g\rangle\\
& = \int_{\Omega}\langle f,((T_GT_F^*)^{-1})^*F(\omega)\rangle\langle G(\omega),g\rangle d\mu(\omega).\\
\end{align*}
The result follows by putting $\Theta=(T_GT_F^*)^{-1}$.
\end{proof}
\begin{thm}
Let $(F,G)$ be a dual pair for $\h$ and
let $U$ and $V$ be two bounded operators on $\h$ such that $\|I_{\h}-VU^*\|<1$.
Then $UF$ and $VG$ are approximately dual continuous frames.
\end{thm}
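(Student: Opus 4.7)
The plan is to reduce the claim to a direct identity between operators, using the composition rule for synthesis operators recorded earlier in the proof of Theorem~3.8. Specifically, for any Bessel mapping $H$ and any bounded operator $W$ on $\h$, the mapping $WH$ is again Bessel with synthesis operator $T_{WH} = W T_H$. Applying this with $H = F, W = U$ and with $H = G, W = V$ shows at once that $UF$ and $VG$ are Bessel mappings with $T_{UF} = U T_F$ and $T_{VG} = V T_G$.

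Next I would combine these formulas with the dual pair assumption. Since $(F,G)$ is a dual pair for $\h$, we have $T_G T_F^* = I_{\h}$. Taking adjoints and using $T_{UF}^* = T_F^* U^*$, I would compute
\begin{align*}
T_{VG} T_{UF}^{*}
&= (V T_G)(T_F^* U^*) \\
&= V (T_G T_F^*) U^* \\
&= V U^*.
\end{align*}

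From this identity the conclusion is immediate: $\|I_{\h} - T_{VG} T_{UF}^*\| = \|I_{\h} - V U^*\| < 1$, which is precisely the definition of $UF$ and $VG$ being approximately dual continuous frames. There is no real obstacle here; the whole content is the observation that synthesis operators behave functorially under premultiplication by bounded operators, so the approximate duality condition for $(UF, VG)$ reads exactly as the hypothesis imposed on $U$ and $V$.
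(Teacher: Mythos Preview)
Your proof is correct and follows exactly the same route as the paper: both arguments use $T_{UF}=UT_F$, $T_{VG}=VT_G$, and the dual pair identity $T_GT_F^*=I_{\h}$ to reduce $\|I_{\h}-T_{VG}T_{UF}^*\|$ to $\|I_{\h}-VU^*\|$. The only difference is that you spell out the intermediate computation in slightly more detail.
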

\begin{proof}
Since $UF$ and $VG$ are  Bessel mappings with synthesis operators $T_{UF}=UT_F$ and $T_{VG}=VT_G$ ( resp.),
so we have
$$\|I_{\h}-T_{VG}T_{UF}^*\|=\|I_{\h}-VT_GT_F^*U^*\|=\|I_{\h}-VU^*\|<1.$$
\end{proof}
\begin{cor}
If $(F,G)$ is a dual pair for $\h$ and $U$ is a unitary operator on $\h$,
then $(UF,UG)$ is approximately dual continuous frames.
\end{cor}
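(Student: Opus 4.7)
The plan is to reduce this corollary immediately to the preceding theorem, which says that if $(F,G)$ is a dual pair and $U,V$ are bounded operators on $\h$ with $\|I_{\h}-VU^*\|<1$, then $UF$ and $VG$ are approximately dual continuous frames. So I would simply specialize that theorem by choosing $V=U$.

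The key observation is that when $U$ is unitary, $UU^*=I_{\h}$, and hence
\[
\|I_{\h}-UU^*\|=\|I_{\h}-I_{\h}\|=0<1.
\]
Thus the hypothesis $\|I_{\h}-VU^*\|<1$ of the previous theorem is trivially satisfied with $V=U$. Applying that theorem yields that $UF$ and $UG$ are approximately dual continuous frames, which is exactly the conclusion of the corollary.

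There is really no obstacle here; the corollary is a direct specialization. If one wanted a fully self-contained proof, one could also verify the condition directly: since $UF$ and $UG$ are Bessel mappings with synthesis operators $T_{UF}=UT_F$ and $T_{UG}=UT_G$, and since $T_GT_F^*=I_{\h}$ by the assumption that $(F,G)$ is a dual pair, we get
\[
T_{UG}T_{UF}^*=UT_GT_F^*U^*=UU^*=I_{\h},
\]
so in fact $\|I_{\h}-T_{UG}T_{UF}^*\|=0<1$, which shows $(UF,UG)$ is approximately dual (and even a genuine dual pair, matching the earlier Corollary in Section~3).
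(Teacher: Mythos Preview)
Your proof is correct and is precisely the intended approach: the corollary is stated in the paper without proof, as it follows immediately from the preceding theorem by taking $V=U$ and using $UU^*=I_{\h}$, exactly as you argue. Your additional direct verification that $T_{UG}T_{UF}^*=I_{\h}$ is also valid and, as you note, recovers the earlier Section~3 corollary that $(UF,UG)$ is in fact a genuine dual pair.
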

We proceed this section with the following result which
gives a sufficient and necessary condition for two continuous frames $F$ and $G$ under which
 they are approximately continuous frames.
To this end, recall that every bounded and positive operator $U:\h\to\h$
has a unique bounded and positive square root $U^{\frac{1}{2}}$.
Moreover, if the operator $U$ is self-adjoint (resp. invertible),
then $U^{\frac{1}{2}}$ is also self-adjoint (resp. invertible),
see A.6.7 of \cite{chri}.
\begin{thm}\label{4.8}
Let $F$ be continuous frame and $G$ a Bessel mapping for $\h$ with upper bounds $B_F$ and $B_G$, respectively.
Then $F$ and $G$ are approximately dual continuous frames if and only if
there exists a bounded operator $D\in\mathcal{B}(\h)$ such that
$$T_FT_G^*=S_F^{\frac{1}{2}}D,~~~DD^*\leq B_GI_{\h},~~~\|I_{\h}-S_F^{\frac{1}{2}}D\|<1.$$
\end{thm}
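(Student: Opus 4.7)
The plan is to prove the two implications separately, since the reverse direction is nearly immediate once the three conditions are in hand, while the forward direction requires constructing $D$ and establishing the Bessel-type bound on $DD^*$.

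For the reverse implication, assume $D \in \mathcal{B}(\h)$ satisfies the three listed conditions. The identity $T_F T_G^* = S_F^{1/2} D$ combined with $\|I_{\h} - S_F^{1/2} D\| < 1$ immediately gives $\|I_{\h} - T_F T_G^*\| < 1$, which by Definition~4.1 says that $F$ and $G$ are approximately dual continuous frames.

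For the forward implication, assume $F$ and $G$ are approximately dual. Since $F$ is a continuous frame, the frame operator $S_F$ is positive and invertible, so $S_F^{1/2}$ is well-defined, positive, and invertible. The natural candidate is
$$D := S_F^{-1/2} T_F T_G^*,$$
which lies in $\mathcal{B}(\h)$. By construction $S_F^{1/2} D = T_F T_G^*$, and the third condition $\|I_{\h} - S_F^{1/2} D\| < 1$ reduces to the approximate-dual hypothesis (using $\|I_{\h} - A\| = \|I_{\h} - A^*\|$ to interchange the two equivalent formulations in Definition~4.1 if needed).

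The main technical step is the middle estimate $DD^* \leq B_G I_{\h}$. A direct computation gives $D^* = T_G T_F^* S_F^{-1/2}$. The essential observation is that $T_F^* S_F^{-1/2} : \h \to L^2(\Omega,\mu)$ is an isometry: for every $f \in \h$,
$$\|T_F^* S_F^{-1/2} f\|_{L^2}^2 = \langle T_F T_F^* S_F^{-1/2} f, S_F^{-1/2} f\rangle = \langle S_F^{1/2} f, S_F^{-1/2} f\rangle = \|f\|^2.$$
Coupling this with the Bessel estimate $\|T_G \varphi\|_{\h} \leq \sqrt{B_G}\,\|\varphi\|_{L^2}$ yields
$$\langle DD^* f, f\rangle = \|T_G T_F^* S_F^{-1/2} f\|_{\h}^2 \leq B_G\,\|T_F^* S_F^{-1/2} f\|_{L^2}^2 = B_G \|f\|^2,$$
and hence $DD^* \leq B_G I_{\h}$. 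I expect the isometry identity for $T_F^* S_F^{-1/2}$ to be the only non-routine point in the argument; once it is established, the remaining verifications are direct operator manipulations.
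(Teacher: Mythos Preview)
Your argument is correct, but it takes a different route from the paper's. For the forward implication the paper first establishes the operator inequality $(T_FT_G^*)(T_FT_G^*)^* \leq B_G\,S_F$ by estimating $\|T_GT_F^*f\|$ via Cauchy--Schwarz and the Bessel bound of $G$, and then invokes Douglas' factorization theorem \cite{dog} to extract a bounded $D$ with $T_FT_G^* = S_F^{1/2}D$, together with a $C^*$-algebra lemma from \cite{mur} to deduce $DD^* \leq B_G I_{\h}$. You instead exploit the invertibility of $S_F^{1/2}$ (available because $F$ is a continuous frame, not merely Bessel) to write down $D = S_F^{-1/2}T_FT_G^*$ explicitly, and you obtain the bound on $DD^*$ from the observation that $T_F^*S_F^{-1/2}$ is an isometry---equivalently, that $S_F^{-1/2}F$ is a Parseval continuous frame. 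Your approach is more elementary and self-contained, avoiding the two external references; the paper's approach, on the other hand, would still produce a factorization in situations where $S_F^{1/2}$ is not invertible, though that extra generality is not needed under the stated hypotheses.
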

\begin{proof}
Since $F$ is a continuous frame for $\h$, then $S_F$ is a bounded and positive operator.
Hence it has a unique bounded square root $S_F^{\frac{1}{2}}$. The proof of the "only if" part is trivial.
To prove "if" part, suppose that $F$ and $G$ are approximately dual continuous frames.
For each $f\in\h$, we have
\begin{align*}
\|T_GT_F^*f\|
& = \sup_{\|g\|=1}|\langle T_GT_F^*f,g\rangle|\\
& = \sup_{\|g\|=1}|\int_{\Omega}\langle f,F(\omega)\rangle\langle G(\omega),g\rangle d\mu(\omega)|\\
& \leq \sup_{\|g\|=1}\left(\int_{\Omega}|\langle f,F(\omega)\rangle|^2 d\mu(\omega)\right)^{\frac{1}{2}}
\left(\int_{\Omega}|\langle g,G(\omega)\rangle|^2 d\mu(\omega)\right)^{\frac{1}{2}}\\
& \leq\sqrt{B_G}\left(\int_{\Omega}|\langle f,F(\omega)\rangle|^2 d\mu(\omega)\right)^{\frac{1}{2}}.
\end{align*}
Therefore, we have
$$\langle(T_FT_G^*)(T_FT_G^*)^*f,f\rangle\leq B_G\langle S_Ff,f\rangle,~~~f\in\h.$$
Thus for all $f\in\h$
$$(T_FT_G^*)(T_FT_G^*)^*\leq B_GS_F^{\frac{1}{2}}S_F^{\frac{1}{2}}.$$
The above inequality results:
\begin{enumerate}
\item According to Theorem 1 of \cite{dog}, there exists a bounded operator $D\in B(\h)$ such that $T_FT_G^*=S_F^{\frac{1}{2}}D$.
\item Theorem 2.2.5 in \cite{mur} implies that $DD^*\leq B_GI_{\h}$.
\end{enumerate}
\end{proof}
Now, we can prove the following theorems.
\begin{thm}\label{4.9}
Let $F$ be a continuous frame for $\h$ with respect to $(\Omega,\mu)$.
Then $F$ and the Bessel mapping $G$ are approximately dual continuous frames if and only if
$$G=D^*S_F^{-\frac{1}{2}}F+K,$$
where $D$ is a bounded operator on $\h$ for which $\|I_{h}-S_F^{\frac{1}{2}}D\|<1$
and $K$ is a Bessel mapping with property $T_FT_K^*=0$.
\end{thm}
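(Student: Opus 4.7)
The plan is to reduce this to Theorem \ref{4.8}, which already produces the bounded operator $D$ with $T_FT_G^*=S_F^{1/2}D$ and $\|I_{\h}-S_F^{1/2}D\|<1$; the remaining work is to identify the ``error'' term $K$ explicitly.

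For the forward direction, I would assume $F$ and $G$ are approximately dual and apply Theorem \ref{4.8} to obtain a bounded operator $D$ with $T_FT_G^*=S_F^{1/2}D$ and $\|I_{\h}-S_F^{1/2}D\|<1$. Then I would set $H:=D^*S_F^{-1/2}F$, noting that $H$ is automatically a Bessel mapping because $D^*S_F^{-1/2}$ is a bounded operator on $\h$ and the composition of a bounded operator with a Bessel mapping yields a Bessel mapping with synthesis operator $T_H=D^*S_F^{-1/2}T_F$ (this was used already in the proof of Theorem 3.9). A short calculation, using $T_FT_F^*=S_F$ and the self-adjointness of $S_F^{-1/2}$, gives
\[
T_FT_H^*=T_FT_F^*S_F^{-1/2}D=S_F\cdot S_F^{-1/2}D=S_F^{1/2}D=T_FT_G^*.
\]
Defining $K:=G-H$, this identity rearranges to $T_FT_K^*=0$, and $K$ is Bessel as the difference of two Bessel mappings. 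Hence $G=D^*S_F^{-1/2}F+K$ with all the required properties.

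For the converse, assume $G=D^*S_F^{-1/2}F+K$ with $\|I_{\h}-S_F^{1/2}D\|<1$ and $T_FT_K^*=0$. Again using $T_{D^*S_F^{-1/2}F}=D^*S_F^{-1/2}T_F$, I would compute
\[
T_FT_G^*=T_F\bigl(T_F^*S_F^{-1/2}D+T_K^*\bigr)=S_FS_F^{-1/2}D+T_FT_K^*=S_F^{1/2}D,
\]
so $\|I_{\h}-T_FT_G^*\|=\|I_{\h}-S_F^{1/2}D\|<1$, which is exactly the condition in Definition of approximate duality. (Bessel-ness of $G$ also follows from the decomposition.)

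The main obstacle is really bookkeeping rather than a conceptual step: one must be careful that $(D^*S_F^{-1/2})^*=S_F^{-1/2}D$, that $T_F$ applied to $D^*S_F^{-1/2}F$ is $S_F^{1/2}D$ (not something involving $S_F$ in the wrong place), and that the two formulations of approximate duality ($\|I-T_GT_F^*\|<1$ vs.\ $\|I-T_FT_G^*\|<1$) are consistent with the side on which the operator $D$ from Theorem \ref{4.8} is multiplied. Once these identifications are made the proof is essentially a one-line computation in each direction.
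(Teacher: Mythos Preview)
Your proposal is correct and follows essentially the same route as the paper: both directions hinge on Theorem \ref{4.8} to produce the operator $D$ with $T_FT_G^*=S_F^{1/2}D$, and the remainder $K:=G-D^*S_F^{-1/2}F$ is then checked to satisfy $T_FT_K^*=0$ via the identity $T_F(D^*S_F^{-1/2}F)^*=S_F^{1/2}D$. The only cosmetic difference is that in the converse you conclude directly from $\|I_{\h}-T_FT_G^*\|=\|I_{\h}-S_F^{1/2}D\|<1$, whereas the paper routes this through the ``only if'' part of Theorem \ref{4.8}; the content is identical.
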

\begin{proof}
First, we put $G=D^*S_F^{-\frac{1}{2}}F+K$, then for each $f\in\h$,
the $\omega\mapsto\langle f,G(\omega)\rangle$ is a measurable function.
Also, for each $f\in\h$, we have
$$\int_{\Omega}|\langle f,G(\omega)\rangle|^2d\mu(\omega)\leq(B_K+\|D\|^2)\|f\|^2.$$
Therefore, $G$ is a Bessel mapping with Bessel bound $B_G=B_K+\|D\|^2$.
Now, for all $f\in\h$ and $\omega\in\Omega$, we have
\begin{align*}
(T_G^*f)(\omega)
& = \langle f,D^*S_F^{-\frac{1}{2}}F(\omega)\rangle+\langle f,K(\omega)\rangle\\
& = \langle S_F^{-\frac{1}{2}}Df,F(\omega)\rangle+\langle f,K(\omega)\rangle\\
& = T_F^*(S_F^{-\frac{1}{2}}Df)(\omega)+(T_K^*f)(\omega)
\end{align*}
and
\begin{align*}
T_FT_G^*f
& = T_FT_F^*(S_F^{-\frac{1}{2}}Df)+T_FT_K^*f\\
& =  S_F^{\frac{1}{2}}Df.\\
\end{align*}
Hence $T_FT_G^*=S_F^{\frac{1}{2}}D$.
Moreover,
$$\langle DD^*f,f\rangle=\langle D^*f,D^*f\rangle=\|D^*f\|^2\leq\|D\|^2\|f\|^2\leq B_G\|f\|^2=B_G\langle f,f\rangle,$$
for all $f\in\h$.
Now, we use previous theorem to conclude that $F$ and $G$ are approximately dual continuous frames.\\
Conversely, let $F$ and $G$ are approximately dual continuous frames.
According to Theorem \ref{4.8},
there exists an invertible operator $D$ in $\mathcal{B}(\h)$ such that
$T_FT_G^*=S_F^{\frac{1}{2}}D$ and $\|I_{\h}-S_F^{\frac{1}{2}}D\|<1$.
Put $K=G-D^*S_F^{-\frac{1}{2}}F$.
It is clear that $K$ is a Bessel mapping with bound
$B_K=B_G+\|D\|^2$.
We have for all $f\in\h$ and all $\omega\in\Omega$,
\begin{align*}
(T_G^*-T_F^*S_F^{-\frac{1}{2}}Df)(\omega)
& = \langle f,G(\omega)\rangle-\langle S_F^{-\frac{1}{2}}Df,F(\omega)\rangle\\
& = \langle f,G(\omega)\rangle-\langle f,D^*S_F^{-\frac{1}{2}}F(\omega)\rangle\\
& = \langle f,G(\omega)-D^*S_F^{-\frac{1}{2}}F(\omega)\rangle\\
& = \langle f,K(\omega)\rangle\\
& = (T_K^*f)(\omega).
\end{align*}
Thus
$$T_FT_K^*=T_FT_G^*-T_FT_F^*S_F^{-\frac{1}{2}}D=T_FT_G^*-S_F^{\frac{1}{2}}D=0,$$
and this completes the proof.
\end{proof}
\begin{thm}
Let $F$ be a continuous frame for $\h$ with respect to $(\Omega,\mu)$.
Then $F$ and the Bessel mapping $G$ are approximately dual continuous frames if and only if
$$G=D^*S_F^{-\frac{1}{2}}F-F+S_FK,$$
where $D$ is a bounded operator on $\h$ for which $\|I_{\h}-S_F^{\frac{1}{2}}D\|<1$
and $K$ is a Bessel mapping such that $(F,K)$ is a dual pair for $\h$.
\end{thm}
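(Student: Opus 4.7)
The plan is to reduce this theorem to Theorem \ref{4.9} by a direct substitution, recognizing that the two parametrizations of $G$ are related by a simple bijection between Bessel mappings $K'$ with $T_F T_{K'}^* = 0$ and Bessel mappings $K$ with $(F,K)$ a dual pair. More precisely, I would guess the substitution
$$K' \;=\; -F + S_F K, \qquad K \;=\; S_F^{-1}(F + K'),$$
and verify that these are mutually inverse and that they match the required side conditions.

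The first step is to record the behaviour of the synthesis operator under pre-composition with a bounded operator: if $H$ is a Bessel mapping and $U \in \mathcal{B}(\h)$, then the mapping $\omega \mapsto U H(\omega)$ is Bessel with $T_{UH} = U T_H$, so $T_{UH}^* = T_H^* U^*$. Using this with $U = S_F$ (self-adjoint) one computes $T_{S_F K}^* = T_K^* S_F$, and with $U = S_F^{-1}$ one computes $T_{S_F^{-1}(F+K')}^* = (T_F^* + T_{K'}^*)S_F^{-1}$. These two identities are the only real computations needed.

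For the \emph{if} direction, I would assume $G = D^*S_F^{-1/2}F - F + S_F K$ with $(F,K)$ a dual pair and set $K' = -F + S_F K$. Then $K'$ is a Bessel mapping (sum of Bessel mappings), and
$$T_F T_{K'}^* \;=\; -T_F T_F^* + T_F T_K^* S_F \;=\; -S_F + I\cdot S_F \;=\; 0,$$
since $(F,K)$ being a dual pair gives $T_F T_K^* = I$. Thus $G = D^*S_F^{-1/2}F + K'$ with $T_F T_{K'}^* = 0$ and $\|I_\h - S_F^{1/2}D\| < 1$, so Theorem \ref{4.9} applies and gives that $F, G$ are approximately dual.

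For the \emph{only if} direction, I would apply Theorem \ref{4.9} to obtain $G = D^*S_F^{-1/2}F + K'$ with $T_F T_{K'}^* = 0$ and the norm bound on $D$, and then set $K = S_F^{-1}(F + K')$. Being the image of a Bessel mapping under the bounded operator $S_F^{-1}$, $K$ is Bessel, and
$$T_F T_K^* \;=\; T_F(T_F^* + T_{K'}^*)S_F^{-1} \;=\; (S_F + 0)S_F^{-1} \;=\; I,$$
so $(F,K)$ is a dual pair. Substituting back, $-F + S_F K = -F + F + K' = K'$, which gives exactly the claimed formula $G = D^*S_F^{-1/2}F - F + S_F K$. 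There is no real obstacle here; the only thing to be careful about is the bookkeeping with adjoints and the fact that $S_F$ and $S_F^{-1}$ are self-adjoint, so the synthesis/analysis identities behave as expected.
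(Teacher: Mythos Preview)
Your proposal is correct and follows essentially the same line as the paper. The paper computes $T_FT_G^*=S_F^{1/2}D$ directly and invokes Theorem~\ref{4.8}, while you package the same computation as the substitution $K'=-F+S_FK$ (equivalently $K=S_F^{-1}(F+K')$) and invoke Theorem~\ref{4.9}; unwinding the proof of Theorem~\ref{4.9} recovers exactly the paper's calculation, and your definition of $K$ in the converse direction coincides with the paper's (up to a sign typo in the paper's displayed formula for $K$).
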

\begin{proof}
Put $G=D^*S_F^{-\frac{1}{2}}F-F+S_FK$ is a Bessel mapping with Bessel constant $B_G=\|D\|^2+B_F+B_G$.
Since $T_G^*=T_F^*S_F^{-\frac{1}{2}}D-T_F^*+T_K^*S_F$,
then $T_FT_G^*=S_F^{\frac{1}{2}}D$.
Moreover,
$$DD^*\leq B_GI_{\h}.$$
Conversely, let $F$ and $G$ are approximately dual continuous frames.
According to Theorem \ref{4.8},
there exists an invertible operator $D$ in $\mathcal{B}(\h)$ such that
$T_FT_G^*=S_F^{\frac{1}{2}}D$ and $\|I_{\h}-S_F^{\frac{1}{2}}D\|<1$.
Put $K=S_F^{-1}G-S_F^{-1}D^*S_F^{-\frac{1}{2}}F-S_F^{-1}F$.
It is clear that $K$ is a Bessel mapping with bound
$B_K=(B_G+\|D\|^2+B_F)\|S_F^{-1}\|^2$.
Since
$$T_K^*=T_G^*S_F^{-1}-T_F^*S_F^{-\frac{1}{2}}DS_F^{-1}+T_F^*S_F^{-1},$$
then $T_FT_K^*=I_{\h}$
and this completes the proof.
\end{proof}

\subsection{On perturbation of continuous frames}
Perturbation theory is a very important concepts in several areas of mathematics.
It went back to classical perturbation results by Paley and Wienr in 1934.
The perturbations of discrete frames has been discussed in \cite{4}.
For continuous frames, it was studied in \cite{dr2,bal}.
In this subsection, we are giving some results on perturbation of continuous frames in point of view the duality notion.
\begin{thm}
Let $F$ be a Parseval continuous frame and $G$ be a Bessel mapping.
Assume that there exist constants $\lambda, \gamma\geq0$ such that
\begin{align*}
\|\int_{\Omega}\phi(\omega)(F(\omega)-G(\omega))d\mu(\omega)\| \leq
&  \lambda\|\int_{\Omega}\phi(\omega)F(\omega)d\mu(\omega)\|\\
& \hspace{.1cm}+\gamma\left(\int_{\Omega}|\phi(\omega)|^2d\mu(\omega)\right)^{\frac{1}{2}},
\end{align*}
for all $\phi\in L^2(\Omega)$.
If $\lambda+\gamma<1$,
then $(F,G)$ is an approximately dual continuous frames.
\end{thm}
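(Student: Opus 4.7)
The plan is to recast the hypothesis as an operator-theoretic inequality, then exploit the Parseval condition on $F$ to reduce everything to a norm estimate for $T_F - T_G$. Observe first that for every $\phi \in L^2(\Omega,\mu)$,
$$\int_{\Omega}\phi(\omega)(F(\omega)-G(\omega))\,d\mu(\omega) = (T_F-T_G)\phi,$$
so the hypothesis is just
$$\|(T_F-T_G)\phi\| \leq \lambda\|T_F\phi\| + \gamma\|\phi\|_2, \qquad \phi \in L^2(\Omega,\mu).$$

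Next I would use that $F$ is Parseval, so $S_F = T_F T_F^* = I_{\h}$. This makes $T_F^*$ an isometry and in particular $\|T_F\| = \|T_F^*\| = 1$, whence $\|T_F\phi\| \leq \|\phi\|_2$ for all $\phi$. Substituting into the displayed inequality gives
$$\|(T_F-T_G)\phi\| \leq (\lambda+\gamma)\|\phi\|_2,$$
that is, the operator-norm bound $\|T_F - T_G\| \leq \lambda+\gamma$.

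To finish I would factor
$$I_{\h} - T_G T_F^* = T_F T_F^* - T_G T_F^* = (T_F - T_G)T_F^*,$$
and conclude $\|I_{\h} - T_G T_F^*\| \leq \|T_F - T_G\|\,\|T_F^*\| \leq \lambda+\gamma < 1$. By Definition 4.1, $F$ and $G$ are approximately dual continuous frames.

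The argument is essentially a one-line operator estimate; the only step that requires any insight is recognising that the Parseval assumption $T_F T_F^* = I_{\h}$ is precisely what allows the factorisation $I_{\h} - T_G T_F^* = (T_F-T_G)T_F^*$ and simultaneously lets us replace $\|T_F\phi\|$ by $\|\phi\|_2$ on the right-hand side of the hypothesis. Without Parseval, one would only get $\|T_F\phi\| \leq \sqrt{B_F}\|\phi\|_2$ and $\|I_{\h} - T_GT_F^*\|$ would not obviously factor through $T_F-T_G$, so this is the key obstacle that the Parseval assumption removes.
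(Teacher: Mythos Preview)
Your proof is correct and follows essentially the same approach as the paper. The paper's version is marginally more direct: instead of first deriving the global bound $\|T_F-T_G\|\le\lambda+\gamma$ and then factoring, it immediately specializes the hypothesis to $\phi=T_F^*f$, so that $\int_\Omega\phi(\omega)F(\omega)\,d\mu(\omega)=S_Ff=f$ and $\|\phi\|_2=\|T_F^*f\|=\|f\|$ give $\|f-T_GT_F^*f\|\le(\lambda+\gamma)\|f\|$ in one step; but the content is identical.
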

\begin{proof}
For all $f\in\h$
\begin{align*}
\|f-(T_GT_F^*)(f)\|
& = \|\int_{\Omega}\langle f,F(\omega)\rangle(F(\omega)-G(\omega))d\mu(\omega)\|\\
& \leq \lambda\|\int_{\Omega}\langle f,F(\omega)\rangle F(\omega)d\mu(\omega)\|\\
& \hspace{.5cm}+\gamma\left(\int_{\Omega}|\langle f,F(\omega)\rangle|^2d\mu(\omega)\right)^{\frac{1}{2}}\\
& < \|f\|.
\end{align*}
Thus $\|I_{\h}-T_GT_F^*\|<1$.
Consequently, $F$ and $G$ are approximately dual continuous frames.
\end{proof}
\begin{thm}
Let $F$, $G$ and $K$ be Bessel mappings and $B_G$ be the Bessel constant of $G$.
Assume that there exists $\lambda>0$ such that  $\lambda B_G<1$ and for all $f\in\h$
$$\int_{\Omega}|\langle f,F(\omega)-K(\omega)\rangle|^2d\mu(\omega)\leq \lambda\|f\|^2.$$
If $(F,G)$ is a dual pair for $\h$, then $G$ and $K$ are approximately dual continuous frames.
\end{thm}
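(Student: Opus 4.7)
The plan is to verify the definition directly: I will show that $\|I_{\h}-T_GT_K^*\|<1$, which by Definition~4.1 is exactly what it means for $G$ and $K$ to be approximately dual continuous frames. The strategy is to rewrite $I_{\h}-T_GT_K^*$ by inserting the dual pair identity and then apply a Bessel estimate to the difference $F-K$.

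First I would exploit that $(F,G)$ is a dual pair, which by the discussion following Definition~3.1 means $T_GT_F^*=I_{\h}$. Substituting this in place of the identity yields the key algebraic identity
\begin{equation*}
I_{\h}-T_GT_K^* \;=\; T_GT_F^*-T_GT_K^* \;=\; T_G(T_F-T_K)^* \;=\; T_G T_{F-K}^*,
\end{equation*}
where I use linearity of the synthesis operator in its Bessel argument. This reduces the problem to bounding the operator norm of $T_GT_{F-K}^*$.

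Next I would estimate the two factors separately. The hypothesis $\int_{\Omega}|\langle f,F(\omega)-K(\omega)\rangle|^2\,d\mu(\omega)\leq\lambda\|f\|^2$ says precisely that $F-K$ is a Bessel mapping with Bessel constant $\lambda$, so by the basic facts in Section~2 the analysis operator satisfies $\|T_{F-K}^*f\|_{L^2(\Omega,\mu)}\leq\sqrt{\lambda}\,\|f\|$, i.e.\ $\|T_{F-K}^*\|\leq\sqrt{\lambda}$. Similarly, since $G$ is Bessel with constant $B_G$, one has $\|T_G\|\leq\sqrt{B_G}$. Composing these bounds gives
\begin{equation*}
\|I_{\h}-T_GT_K^*\| \;=\; \|T_G T_{F-K}^*\| \;\leq\; \|T_G\|\cdot\|T_{F-K}^*\| \;\leq\; \sqrt{B_G}\cdot\sqrt{\lambda} \;=\; \sqrt{\lambda B_G}.
\end{equation*}

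Finally, the assumption $\lambda B_G<1$ forces $\sqrt{\lambda B_G}<1$, so $\|I_{\h}-T_GT_K^*\|<1$, and Definition~4.1 immediately yields that $G$ and $K$ are approximately dual continuous frames. There is no real obstacle here — the only thing one must be careful about is recognizing the perturbation hypothesis as a Bessel bound for $F-K$ so that $T_{F-K}^*$ is a genuine bounded operator into $L^2(\Omega,\mu)$; after that, the proof is a one-line norm estimate.
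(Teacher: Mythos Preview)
Your proof is correct and follows essentially the same approach as the paper: both use the dual pair identity $T_GT_F^*=I_{\h}$ to write $I_{\h}-T_GT_K^*=T_G(T_F^*-T_K^*)$ and then bound the factors by $\sqrt{B_G}$ and $\sqrt{\lambda}$ respectively. The only cosmetic difference is that you phrase $T_F^*-T_K^*$ as $T_{F-K}^*$ and interpret the hypothesis as a Bessel bound for $F-K$, whereas the paper computes $\|(T_F^*-T_K^*)f\|$ directly.
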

\begin{proof}
Since for any $f\in\h$, $(T_F^*f-T_K^*f)(\omega)=\langle f, F(\omega)-K(\omega)\rangle$,
then
$$\|(T_F^*-T_K^*)f\|^2=\int_{\Omega}|\langle f,F(\omega)-K(\omega)\rangle|^2\leq \lambda\|f\|^2$$
and consequently, $\|T_F^*-T_H^*\|\leq\sqrt{\lambda}$.
Now we have
$$\|I_{\h}-T_GT_K^*\|=\|T_G(T_F^*-T_K^*)\|\leq\|T_G\|\|T_F^*-T_K^*\|\leq\sqrt{\lambda B_G}<1.$$
\end{proof}
\begin{thm}
Let $(F,G)$ be a dual pair for $\h$ and $K:\Omega\to\h$ be a Bessel mapping.
Assume that there exist constants $\lambda, \gamma\geq0$ such that
\begin{align*}
\|\int_{\Omega}\phi(\omega)(F(\omega)-K(\omega))d\mu(\omega)\| \leq
& \lambda\|\int_{\Omega}\phi(\omega)F(\omega)d\mu(\omega)\|\\
& \hspace{.1cm}+\gamma\left(\int_{\Omega}|\phi(\omega)|^2d\mu(\omega)\right)^{\frac{1}{2}},
\end{align*}
for all $\phi\in L^2(\Omega)$.
If $\lambda+\gamma\sqrt{B_G}<1$,
Then $G$ and $K$ are approximately dual continuous frames,
where $B_G$ is Bessel constant of $G$.
\end{thm}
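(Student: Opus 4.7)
The goal is to verify the approximate dual condition $\|I_{\h}-T_KT_G^*\|<1$, exploiting the fact that $(F,G)$ being a dual pair gives us both $T_GT_F^*=I_{\h}$ and, by taking adjoints, $T_FT_G^*=I_{\h}$.

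The plan is to rewrite the defect operator in a form that lets us invoke the Paley--Wiener style hypothesis. For any $g\in\h$, I would start from
\[
g-T_KT_G^*g \;=\; T_FT_G^*g - T_KT_G^*g \;=\; (T_F-T_K)T_G^*g,
\]
noting that the difference of synthesis operators acts on $L^2(\Omega,\mu)$ as $\phi\mapsto\int_{\Omega}\phi(\omega)(F(\omega)-K(\omega))d\mu(\omega)$. The key move is then to apply the hypothesis with the specific choice $\phi=T_G^*g$, which belongs to $L^2(\Omega)$ because $G$ is Bessel.

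Doing so produces
\[
\|(T_F-T_K)T_G^*g\| \;\leq\; \lambda\Big\|\int_{\Omega}(T_G^*g)(\omega)F(\omega)\,d\mu(\omega)\Big\| + \gamma\Big(\int_{\Omega}|(T_G^*g)(\omega)|^2\,d\mu(\omega)\Big)^{1/2}.
\]
The first term equals $\lambda\|T_FT_G^*g\|=\lambda\|g\|$ by the dual pair identity, and the second is bounded by $\gamma\sqrt{B_G}\,\|g\|$ via the Bessel estimate $\|T_G^*g\|_{L^2}\leq\sqrt{B_G}\|g\|$. Combining gives
\[
\|g-T_KT_G^*g\| \;\leq\; (\lambda+\gamma\sqrt{B_G})\|g\|,
\]
so $\|I_{\h}-T_KT_G^*\|\leq\lambda+\gamma\sqrt{B_G}<1$, which by Definition~4.1 means $G$ and $K$ are approximately dual continuous frames.

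There is no serious obstacle here: the only subtle point is recognizing that $T_G^*g$ is the right element of $L^2(\Omega)$ to feed into the hypothesis so that the first term on the right collapses via $T_FT_G^*=I_{\h}$ and the second is controlled by the Bessel constant of $G$. This parallels the earlier theorem in the subsection (the Parseval case) where one fed $T_F^*f$ into the analogous inequality; the present proof is the version where $F$ is merely part of a dual pair and one pays for it with an extra factor $\sqrt{B_G}$ in front of $\gamma$, exactly matching the hypothesis $\lambda+\gamma\sqrt{B_G}<1$.
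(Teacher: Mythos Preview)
Your proof is correct and follows essentially the same approach as the paper: both arguments apply the perturbation hypothesis with $\phi=T_G^*f$ (i.e., $\phi(\omega)=\langle f,G(\omega)\rangle$), use the dual pair identity $T_FT_G^*=I_{\h}$ to reduce the first term to $\lambda\|f\|$, and bound the second term by $\gamma\sqrt{B_G}\|f\|$ via the Bessel estimate. The only difference is notational---you phrase the computation in operator form $(T_F-T_K)T_G^*$, while the paper writes out the integrals explicitly.
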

\begin{proof}
For all $f\in\h$
\begin{align*}
\|f-(T_KT_G^*)(f)\|
& = \|\int_{\Omega}\langle f,G(\omega)\rangle(F(\omega)-K(\omega))d\mu(\omega)\|\\
& \leq \lambda\|\int_{\Omega}\langle f,G(\omega)\rangle F(\omega)d\mu(\omega)\|\\
& \hspace{.5cm}+\gamma\left(\int_{\Omega}|\langle f,G(\omega)\rangle|^2d\mu(\omega)\right)^{\frac{1}{2}}\\
& \leq (\lambda+\gamma\sqrt{B_G})\|f\|.
\end{align*}
Thus $\|I_{\h}-T_KT_G^*\|<1$.
\end{proof}
\textbf{Acknowledgement:}
The authors would like to thank the referee(s) for their useful suggestions and comments.  

\end{document}